\theoremstyle{definition} \newtheorem{terms}{Definition}
\theoremstyle{plain} \newtheorem{theorem}{Theorem} \newtheorem{lemma}{Lemma} \newtheorem{proposition}{Proposition} \newtheorem{corollary}{Corollary}
\title{Spectral Asymptotics for Toeplitz Matrices Having Certain Piecewise Continuous Symbols}
\author{Richard A. Libby}
\date{}							
\begin{document}
\maketitle

\begin{abstract}
The limiting behavior of the eigenvalues of the Toeplitz matrices $T_{n}[\sigma]=(\hat{\sigma}(i-j))$, where $0\leq i,j \leq n$, as $n \to \infty$, is investigated in the case of complex valued functions $\sigma$ defined on the unit circle $\mathbb{T}$ and having exactly one point of discontinuity. It is found that if $\sigma(z)=(-z)^{\beta}\tau(z)$, $\beta$ not an integer and $\tau$ satisfying certain smoothness conditions, then $\det T_{n}[\sigma]=\mathbf{G}[\tau]^{n+1}n^{-\beta^{2}}E[\tau,\beta](1+o(1))$ as $n \to \infty$, where $\mathbf{G}[\tau]$ denotes the geometric mean of $\tau$ and $E$ is a constant independent of $n$. A value for $E$ is found in terms of the Fourier coefficients of $\tau$ and an analytic function of $\beta$. These results were known previously in the case that $\Re \beta$, the real part of $\beta$, was sufficiently small. A corollary of this result is a determination of the limiting set and limiting distributions for the eigenvalues of $T_{n}[\sigma]$.
\end{abstract}

\section{Introduction}

A classical result of Szeg\"{o} describes the limiting behavior of the eigenvalues for the Toeplitz matrices $T_{n}[\sigma]=(\hat{\sigma}(i-j))$, $0\leq i,j \leq n$, for bounded, measurable, real valued functions $\sigma$ defined on the unit circle $\mathbb{T}$ as $n \to \infty$. Here $\hat{\sigma}(k)$ denotes the $k$th Fourier coefficient of $\sigma$. Let $m$ denote Lebesgue measure on $\mathbb{T}$ normalized so that $m(\mathbb{T})=1$. Define $\mu_{\sigma}$ as the measure given by $\mu_{\sigma}(A)=m(\sigma^{-1}(A))$ for measurable sets $A$. Let $\mu_{n,\sigma}$ denote the discrete measure assigning to each point $\lambda$ in the spectrum of $T_{n}[\sigma]$ measure $\frac{1}{n+1}$ times the multiplicity of $\lambda$. Szeg\"{o}'s well known result is that the measure $\mu_{n,\sigma}$ tends weakly to $\mu_{\sigma}$ as $n \to \infty$; i.e., for any continuous function $F$,
\begin{equation}
\lim_{n \to \infty} \frac{1}{n+1}\sum_{i=0}^{n}F(\lambda_{i,n})=\frac{1}{2\pi}\int_{0}^{2\pi}F(\sigma(e^{i\theta}))d\theta, \label{first}
\end{equation}
where $\lambda_{0,n}, \ldots ,\lambda_{n,n}$ are the eigenvalues of $T_{n}[\sigma]$, counted according to multiplicity. For general $\sigma : \mathbb{T}\to\mathbb{C}$ such that \eqref{first} holds, the eigenvalues of $T_{n}[\sigma]$ are said to be \emph{canonically distributed} (see \cite{21}).

For the moment, consider also the \emph{limiting set} of the eigenvalues of $T_{n}[\sigma]$, namely the set of limit points of sequences having the form $\{\lambda_{i_{j},n_{j}}:j=1,2,3,\ldots,j<k\Rightarrow n_{j}<n_{k}\}$. If $\sigma$ is continuous, bounded, and real valued, then from an application of the \emph{finite section method}, as developed by B\"{o}ttcher and Slibermann (\cite{6}), and a theorem of Hartman and Wintner (\cite{10}, pp. 179-183), one finds that the limiting set of the eigenvalues is equal to the range of $\sigma$.

These results need not hold for $\sigma$ complex-valued. A trivial example is provided by the function $\sigma(z)=z$, for which the finite Toeplitz matrices are strictly lower triangular. The measures $\mu_{n,\sigma}$ equal the Dirac measure at $\{0\}$ and clearly do not converge in any meaningful way to $\mu_{\sigma}$, which in this case is just our normalized Lebesgue measure $m$. Canonical distribution has been shown to fail in general for Laurent polynomials (\cite{15}) and for rational functions with poles off $\mathbb{T}$ (\cite{9}). The limiting sets, too, behave differently from the real valued case. Canonical distribution is known to hold for certain classes of symbols $\sigma$ that have (among other features) the property that $\sigma$ cannot be extended analytically to any open annulus either containing $\mathbb{T}$ or having $\mathbb{T}$ as a component of its boundary. It is an outstanding conjecture of Widom that this last condition is sufficient for canonical distribution to hold (see \cite{21}).

In order to obtain information concerning the eigenvalue distributions of these matrices, one begins with the asymptotic nature of their determinants $D_{n}[\sigma]= \det T_{n}[\sigma]$. Note that if $\sigma$ is positive and bounded away from $0$ and if \eqref{first} holds, then from the case $F(x)=\log (x)$ one obtains
\begin{equation}
\lim_{n \to \infty} \frac{1}{n+1} \log D_{n}[\sigma]=\log \mathbf{G}[\sigma], \label{second}
\end{equation}
where as before, $\mathbf{G}[\sigma]$ denotes the geometric mean of $\sigma$, namely
\[
\mathbf{G}[\sigma]=\exp \left(\frac {1}{2\pi}\int_{0}^{2\pi}|\log \sigma(\theta)| d\theta \right).
\]
Under certain conditions on $\sigma$, a technique due to Widom allows one to obtain \eqref{first} for general $F$ from the special case \eqref{second}. Much research beginning with Szeg\"{o}'s original work has been devoted to proving results similar to \eqref{second} for other classes of functions, and the theory of Toeplitz determinants has also been extended to cases where $\sigma$ itself is matrix valued. The oldest of these results considers the cases where $\sigma$ is sufficiently smooth, real valued, and has no zeroes. In 1952 Szeg\"{o} (\cite{17}) showed that if $\sigma'$ satisfies an appropriate Lipschitz condition, then
\[
D_{n}[\sigma]=\mathbf{G}[\sigma]^{n+1}E[\sigma]\left( 1+o(1) \right),
\]
where
\[
E[\sigma]=\exp \left( \sum_{k=1}^{\infty} k \cdot \widehat{\log\sigma}(k)\widehat{\log\sigma}(-k) \right).
\]
This result does not hold if $\sigma$ has zeroes or discontinuities, the case under present consideration. Relevant in this case is a conjecture of Fischer and Hartwig (\cite{11}), who considered functions with a finite number of zeroes and discontinuities. These functions can be written as
\[
\sigma(e^{i\theta})=\tau(e^{i\theta})\prod_{r=1}^{R}\left(2-2\cos(\theta-\theta_{r})\right)^{\alpha_{r}}e^{i\beta_{r}(\theta-\theta_{r})},
\]
where $\beta_{r}$ is not an integer for any $r$ and $\tau$ is a sufficiently smooth non-vanishing function with winding number zero. By considering special cases where $D_{n}[\sigma]$ is explicitly calculable, they conjectured that
\[
D_{n}[\sigma]=\mathbf{G}[\tau]^{n+1}n^{\sum(\alpha_{r}^{2}-\beta_{r}^{2})}E[\tau,\alpha_{1},\ldots,\alpha_{R},\beta_{1},\ldots,\beta_{R}]\left( 1+o(1) \right),
\]
where $E$ does not depend on $\sigma$. Research of this conjecture includes the work of Widom \cite{20}, Basor \cite{3}, B\"{o}ttcher and Silbermann \cite{8}, and others, providing verification of the conjecture in several cases. In 1973 Widom proved the conjecture in the case that $\beta_{r}=0$ for all $r$, $\tau$ is continuously differentiable and of winding number zero, and $\tau'$ satisfies a Lipschitz condition with positive exponent. A value for $E$ was also obtained. He also found a proof in the case $R=1$, $|\Re\alpha|<\frac{1}{2}$, $|\Re\beta|<\frac{1}{2}$, without specifying the value for $E$. In 1979 Basor verified the conjecture in the case $\alpha_{r}=0$ for all $r$, $| \Re\beta | <\frac{1}{2}$ for all $r$, obtaining an expression for $E$ as well. In the 1980's B\"{o}ttcher and Silbermann verified the conjecture for other cases, for example, when $\alpha_{r}$ is real, $|\alpha_{r}|<\frac{1}{2}$, $|\beta_{r}|<\frac{1}{2}$, and $\alpha_{r}\beta_{r}=0$ for all $r$. In these cases $E$ was expressed as $E[\tau]$ per Szeg\"{o}'s definition, multiplied by an explicit analytic function of the $\alpha_{r}$'s and $\beta_{r}$'s.

The present work examines the conjecture in the case $R=1$ and $\alpha_{1}=0$. The significance of this case lies in the fact that the winding number of $\sigma$ will not be assumed to be bounded. The main result obtained is that if $\beta$ is not an integer and
\begin{equation}
\sigma(e^{i\theta})=\tau(e^{i\theta})e^{i\beta(\theta-\theta_{1})} \label{third}
\end{equation}
where $\tau$ is $C^{\infty}$ and non-vanishing of winding number zero, then
\begin{equation}
D_{n}[\sigma]=\mathbf{G}[\tau]^{n+1}n^{-\beta^{2}}G(1+\beta)G(1-\beta)e^{-in\beta\theta_{1}}E[\tau]\left( 1+o(1) \right). \label{fourth}
\end{equation}
Here $\mathbf{G}[\tau]$ again denotes the geometric mean; $G(\cdot)$ denotes the Barnes G-function (\cite{2}, a formula for $G$ is given below) and $E[\tau]$ again uses Szeg\"{o}'s definition. From this result one finds that the eigenvalues of the matrices $T_{n}[\sigma]$ are canonically distributed as $n \to \infty$ and that the limiting set of the eigenvalues is the closure of the range of $\sigma$.

The main idea behind the proof is as follows. We start with Widom's result for the case $|\Re\beta|<\frac{1}{2}$ in which he constructs a pair of operator equations via Wiener-Hopf factorization, which allows us to describe the asymptotic behavior of certain elements of the inverse matrices $T_{n}[\sigma]^{-1}$ as $n \to \infty$. This information yields the desired asymptotic formula of the main result, by way of Jacobi's generalization of Cramer's rule. This technique allows us to determine the general nature of the asymptotic formula for almost all $\beta$. An application of the Poisson-Jensen formula and careful estimates for the behavior of the determinants as $\beta$ approaches the remaining set of measure zero show that the asymptotic formula holds there as well. The product of the Barnes G-function and $E[\tau]$ in \eqref{fourth} is the same as is found by Basor and by B\"{o}ttcher and Silbermann, via use of Vitali's convergence theorem, making the extension of Widom's result minus the restriction on $\beta$ complete. Much of the machinery for this result was developed in the author's Ph.D. thesis to prove the result when $|\Re\beta|<\frac{5}{2}$; the goal of the present work is to remove this last restriction.

\section{Solutions to Finite Toeplitz Systems}
\subsection{Preliminaries}
For complex valued $\beta$ we set $z^{\beta}=\exp\left(\beta\log|z|+i\arg(z)\right)$ where $\arg$ takes its values in the interval $[-\pi,\pi)$. In the expression \eqref{third} we will assume without loss of generality that $\theta_{1}=\pi$ so that we may write
\begin{equation}
\sigma(z)=(-z)^{\beta}\tau(z). \label{fifth}
\end{equation}
The minus sign in this expression simplifies many of the expressions which follow.
\newline
\begin{terms}
Let $C_{\beta}$ denote the class of functions $\sigma:\mathbb{T}\rightarrow\mathbb{C}$ of the form $\sigma(z)=(-z)^{\beta}\tau(z)$, where $\tau$ satisfies the following conditions:
\newline
\newline
i) $\tau$ is continuous on $\mathbb{T}$,
\newline
ii) $0 \notin range(\tau),$
\newline
iii) $\Delta_{0 \leq \theta \leq 2\pi} \arg \left( \tau(e^{i\theta}) \right) = 0$,
\newline
iv) $\tau$ is $C^{\infty}$ away from $\theta=0$ and the left and right hand limits
\[
\lim_{\theta \rightarrow 0^{+}} \frac{d^{k}}{d\theta^{k}} \tau(e^{i\theta}) \qquad \lim_{\theta \rightarrow 2\pi^{-}} \frac{d^{k}}{d\theta^{k}} \tau(e^{i\theta})
\]
exist for all $k>0$.
\end{terms}

Let $| \Re\beta | < \frac{1}{2}$ and suppose $\sigma \in C_{\beta}$. It is known (\cite{20} and \cite{3}) that
\begin{equation}
D_{n}[\sigma]=\mathbf{G}[\tau]^{n+1} n^{-\beta^{2}} E[\tau,\beta] \left( 1+o(1) \right) \label{sixth}
\end{equation}
where $\mathbf{G}[\tau]$ is the geometric mean and
\[
E[\tau,\beta]=G(1+\beta) G(1-\beta) E[\tau],
\]
where
\[
E[\tau]=\exp \left( \frac{1}{2\pi}\sum_{k=1}^{\infty} k \cdot \widehat{\log\tau}(k)\widehat{\log\tau}(-k) \right)
\]
and $G(\cdot)$ denotes the Barnes G-function (\cite{2})
\[
G(z+1)=(2\pi)^{z/2}e^{-[z^{2}(\gamma+1)+z]/2}\prod_{n=1}^{\infty} \left[ \left( 1+\frac{z}{n} \right)^{n} e^{z^{2}/(2n)-z} \right],
\]
$\gamma$ being Euler's constant. The Barnes G-function is perhaps best understood in terms of its functional equation $G(z+1)=\Gamma(z)G(z)$ and its value $G(1)=1$.

We make use of the following facts (\cite{6}, pp. 26-39). Let $\mathbf{PC}$ denote the algebra of all bounded, measurable, complex valued functions $\sigma$ on $\mathbb{T}$ that are continuous except for finitely many points, such that the right and left hand limits of $\sigma$ exist at these points of discontinuity. For $\sigma \in \mathbf{PC}$ let $R_{\sigma}$ denote the continuous curve obtained by adjoining to the range of $\sigma$ the straight line segments connecting the right and left hand limits of each discontinuity of $\sigma$. Let $w(R_{\sigma})$ denote the winding number of $R_{\sigma}$ about the origin (\cite{1}, pp. 114-117), provided it exists. Let $\mathbf{H}^{2}(\mathbb{T}) \subset \mathbf{L}^{2}(\mathbb{T})$ denote the Hardy space of square integrable functions on $\mathbb{T}$ whose negative Fourier coefficients vanish; define for any $\sigma \in \mathbf{L}^{\infty}(\mathbb{T})$, the \emph{Toeplitz operator with symbol $\sigma$} on $\mathbf{H}^{2}(\mathbb{T})$, as $T[\sigma]=PM[\sigma]$, where $M[\sigma]$ denotes multiplication by $\sigma$ and $P$ is the standard projection of $\mathbf{L}^{2}(\mathbb{T})$ onto $\mathbf{H}^{2}(\mathbb{T})$. Let $\mathbf{P}_{n}$ denote the projection of $\mathbf{H}^{2}(\mathbb{T})$ onto the subspace spanned by the functions $\{ 1,e^{i\theta}, \ldots, e^{in\theta} \}$. With respect to this basis, the operator $P_{n}M[\sigma]P_{n}$ has matrix representation $T_{n}[\sigma]$. Taking a minor liberty with operator and matrix notation we can examine the nature of any convergence of operators $T_{n}[\sigma] \rightarrow T[\sigma]$ by imagining the matrices $T_{n}[\sigma]$ growing without bound to a semi-infinite matrix representing $T[\sigma]$.
\newline
\begin{theorem}
For any $\sigma \in \mathbf{PC}$, $T[\sigma]$ is a Fredholm operator if and only if $w(R_{\sigma})$ exists, in which case the index of $T[\sigma]$ is equal to $-w(R_{\sigma})$.
\newline
\end{theorem}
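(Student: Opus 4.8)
The statement is the Gohberg--Krupnik Fredholm criterion for Toeplitz operators with piecewise continuous symbols, and the plan is to follow the standard route: first settle the continuous case, then reduce an arbitrary $\sigma\in\mathbf{PC}$ to a product of a nonvanishing continuous function and finitely many elementary jump functions and analyze each factor. For $a\in C(\mathbb{T})$ every Hankel operator $H[a]$ is compact (Hartman), so, using $T[a]T[b]-T[ab]=-H[a]H[\widetilde b]$, the map $a\mapsto T[a]+\mathcal{K}(\mathbf{H}^{2}(\mathbb{T}))$ is a unital homomorphism into the Calkin algebra; hence $0\notin a(\mathbb{T})$ forces $T[a]$ Fredholm, while if $a(t_{0})=0$ a singular sequence concentrated near $t_{0}$ (normalized truncated reproducing kernels) shows $T[a]$ is not Fredholm, and in fact $a(\mathbb{T})\subseteq\operatorname{ess\,spec}T[a]$. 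In the Fredholm case the index is computed by homotopy: within $C(\mathbb{T})\setminus\{0\}$ one connects $a$ to $z^{\kappa}$ with $\kappa=w(a)$, and $T[z^{\kappa}]$ is a power of the shift with index $-\kappa=-w(a)$, matching the claimed formula since for continuous $a$ the curve $R_{a}$ is just $a(\mathbb{T})$.

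Next, given $\sigma\in\mathbf{PC}$ with discontinuities $t_{1},\dots,t_{R}$, I would fix branches $\delta_{r}\in\mathbb{C}$ with $e^{2\pi i\delta_{r}}=\sigma(t_{r}+0)/\sigma(t_{r}-0)$ and write $\sigma=b\prod_{r=1}^{R}\varphi_{t_{r},\delta_{r}}$ with $b\in C(\mathbb{T})$ nonvanishing, where $\varphi_{t_{r},\delta_{r}}$ is the standard single-jump function at $t_{r}$, a rotate of $(-z)^{\delta_{r}}$ up to a unimodular constant. Since the discontinuity sets of the factors are pairwise disjoint, the key structural lemma that $H[a]H[\widetilde c]$ is compact whenever $a$ and $c$ have disjoint sets of discontinuities gives $T[\sigma]=T[b]\prod_{r}T[\varphi_{t_{r},\delta_{r}}]$ modulo compacts. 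For one jump function, write $\delta_{r}=k_{r}+\delta_{r}'$ with $k_{r}\in\mathbb{Z}$ and $-\tfrac12<\Re\delta_{r}'\le\tfrac12$; then $(-z)^{\delta_{r}}=(-1)^{k_{r}}z^{k_{r}}(-z)^{\delta_{r}'}$, and because $z^{k_{r}}$ is analytic (for $k_{r}\ge0$) or anti-analytic (for $k_{r}<0$) the corresponding Hankel product vanishes, so $T[\varphi_{t_{r},\delta_{r}}]$ equals, up to a unitary rotation and a scalar, a product of $T[z^{k_{r}}]$ and $T[(-z)^{\delta_{r}'}]$. When $\Re\delta_{r}'<\tfrac12$ the operator $T[(-z)^{\delta_{r}'}]$ is invertible --- it admits a piecewise continuous Wiener--Hopf factorization in exactly this strip, the classical fact underlying \eqref{sixth} and the references cited there --- so $T[\varphi_{t_{r},\delta_{r}}]$ is Fredholm of index $-k_{r}$; when $\Re\delta_{r}'=\tfrac12$ the one-sided limits at the jump are negative real multiples of one another, the connecting segment of $R_{\varphi_{t_{r},\delta_{r}}}$ runs through the origin, and $T[(-z)^{\delta_{r}'}]$ fails to be Fredholm. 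Assembling the factors: $T[\sigma]$ is Fredholm iff $0\notin b(\mathbb{T})$ and $\Re\delta_{r}'\ne\tfrac12$ for every $r$, which is exactly the condition $0\notin R_{\sigma}$, i.e. that $w(R_{\sigma})$ exists; and then, by additivity of the index, $\operatorname{ind}T[\sigma]=-w(b)-\sum_{r}k_{r}$, which equals $-w(R_{\sigma})$ after a bookkeeping of argument variation showing that $R_{\sigma}$ decomposes (up to closing-up) into the curve of $b$ and the jump segments, the $r$-th contributing winding $k_{r}$ under the orientation induced by the chosen branch $\delta_{r}$.

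The direction I expect to be the main obstacle is the necessity when $0\in R_{\sigma}$ but $0$ lies on a connecting segment rather than on $\sigma(\mathbb{T})$ itself: this is not visible to the range-in-essential-spectrum argument for continuous symbols, and in the reduction above it is only handled indirectly, through the failure of $T[(-z)^{\delta_{r}'}]$ to be Fredholm at $\Re\delta_{r}'=\tfrac12$. A clean unified treatment of this point, which I would use to cross-check the whole argument, is a localization principle (Allan--Douglas, or Simonenko's local method) applied to the Banach algebra generated by $\{T[\sigma]+\mathcal{K}:\sigma\in\mathbf{PC}\}$ in the Calkin algebra: its center contains an isometric copy of $C(\mathbb{T})$, the local algebra at a point of continuity of $\sigma$ is trivial, and the local algebra at a jump point $t_{0}$ is commutative and isomorphic to $C[0,1]$ via a map sending $T[\sigma]$ to an affine parametrization of the segment $[\sigma(t_{0}-0),\sigma(t_{0}+0)]$; invertibility modulo compacts is then equivalent to nonvanishing of the resulting symbol on $\mathbb{T}\times[0,1]$, whose image is precisely $R_{\sigma}$, and the index is minus the winding number of that symbol, i.e. $-w(R_{\sigma})$. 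The remaining delicate analytic input, the invertibility of $T[(-z)^{\delta'}]$ for $|\Re\delta'|<\tfrac12$, I would take from the references already invoked in the excerpt rather than reprove.
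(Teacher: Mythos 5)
The paper does not actually prove this statement: Theorem~1 is quoted as background, with an explicit citation to B\"ottcher and Silbermann \cite{6}, pp.~26--39, and the only consequence the paper extracts from it (together with Coburn's lemma) is the invertibility of $T[\sigma]$ for $\sigma\in C_{\beta}$ with $|\Re\beta|<\frac{1}{2}$. Your proposal is, in substance, the proof given in that reference and in Gohberg--Krupnik: compactness of Hankel operators with continuous symbols via $T[a]T[b]-T[ab]=-H[a]H[\widetilde b]$, factorization of a $\mathbf{PC}$ symbol into a nonvanishing continuous part times elementary one-jump functions, the disjoint-discontinuity compactness lemma, the explicit Wiener--Hopf factorization of $(-z)^{\delta'}$ in the strip $|\Re\delta'|<\frac{1}{2}$ (the same fact the paper uses in \S 2.2), degeneration at $\Re\delta'=\frac{1}{2}$, and index additivity plus the winding-number bookkeeping; the Allan--Douglas localization you sketch as a cross-check is the other standard organization of the same material. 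So the proposal is correct and is essentially the argument of the source the paper leans on, not a different route.

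One point to patch before the jump decomposition can even be set up: writing $\sigma=b\prod_{r}\varphi_{t_{r},\delta_{r}}$ with $b$ continuous and nonvanishing presupposes that all one-sided limits $\sigma(t_{r}\pm 0)$ are nonzero (otherwise no $\delta_{r}$ with $e^{2\pi i\delta_{r}}=\sigma(t_{r}+0)/\sigma(t_{r}-0)$ exists) and that $\sigma$ does not vanish at its points of continuity. These cases should be dispatched first, e.g.\ by the Hartman--Wintner inclusion of the essential range of $\sigma$ in the essential spectrum of $T[\sigma]$, which shows $T[\sigma]$ is not Fredholm whenever $0$ lies in the closure of the range of $\sigma$; since in all such cases $0\in R_{\sigma}$ and $w(R_{\sigma})$ fails to exist, this is consistent with the statement. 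With that preliminary reduction, and granting the invertibility of $T[(-z)^{\delta'}]$ for $|\Re\delta'|<\frac{1}{2}$ from the cited literature as you propose, the argument is complete.
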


\begin{theorem}[Coburn]
For any $\sigma \in \mathbf{L}^{\infty}(\mathbb{T})$ not identically zero, either $T[\sigma]$ or $T[\bar{\sigma}]$ has trivial kernel.
\end{theorem}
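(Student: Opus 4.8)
The plan is to argue by contradiction. Suppose that neither $T[\sigma]$ nor $T[\bar\sigma]$ has trivial kernel, and choose nonzero elements $f\in\ker T[\sigma]$ and $g\in\ker T[\bar\sigma]$ in $\mathbf{H}^{2}(\mathbb{T})$. The goal is to show that the single $\mathbf{L}^{1}(\mathbb{T})$ function $u=\sigma f\,\bar g$ must then vanish identically, and to contrast this with the fact that $f\bar g$ cannot vanish on a set of positive measure.

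First I would unwind the kernel conditions in terms of Fourier supports. Since $\sigma\in\mathbf{L}^{\infty}(\mathbb{T})$ and $f\in\mathbf{L}^{2}(\mathbb{T})$, the product $\sigma f$ lies in $\mathbf{L}^{2}(\mathbb{T})$, and $P(\sigma f)=0$ says precisely that $\widehat{\sigma f}(j)=0$ for every $j\geq 0$; equivalently $\sigma f\in\overline{e^{i\theta}\mathbf{H}^{2}(\mathbb{T})}$. The same computation applied to $g$ gives $\bar\sigma g\in\overline{e^{i\theta}\mathbf{H}^{2}(\mathbb{T})}$.

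Next I would form $u=\sigma f\,\bar g$. Writing $\bar g$ as an element of $\overline{\mathbf{H}^{2}(\mathbb{T})}$ and $\sigma f$ as above, the Cauchy--Schwarz inequality places $u$ in $\mathbf{L}^{1}(\mathbb{T})$, and computing $\hat u$ by convolving the two (absolutely summable) coefficient sequences shows that every nonzero term requires a strictly negative frequency, so $\hat u(j)=0$ for all $j\geq 0$. Applying the identical reasoning to $\bar u=(\bar\sigma g)\,\bar f$, a product of $\overline{e^{i\theta}\mathbf{H}^{2}(\mathbb{T})}$ with $\overline{\mathbf{H}^{2}(\mathbb{T})}$, shows $\widehat{\bar u}(j)=0$ for all $j\geq 0$, hence $\hat u(j)=\overline{\widehat{\bar u}(-j)}=0$ for all $j\leq 0$. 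Therefore every Fourier coefficient of $u$ vanishes, so $\sigma f\,\bar g=0$ a.e.\ on $\mathbb{T}$.

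Finally, since $\sigma$ is not identically zero, the set $\{\,e^{i\theta}:\sigma(e^{i\theta})\neq 0\,\}$ has positive Lebesgue measure, and on it $f\bar g=0$ a.e. But a nonzero function in $\mathbf{H}^{2}(\mathbb{T})$ has a zero set of measure zero (by the F.\ and M.\ Riesz theorem, equivalently by inner--outer factorization), so $f\neq 0$ a.e.\ and $g\neq 0$ a.e., whence $f\bar g\neq 0$ a.e.\ --- a contradiction. Hence at least one of the two kernels is trivial. The only step here that is not pure bookkeeping is the last one, and the required fact about zero sets of Hardy functions is classical and can simply be cited; the manipulation of Fourier supports of products of Hardy-space functions is routine.
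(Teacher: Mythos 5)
The paper offers no proof of this statement: it is quoted as a standing fact with a pointer to B\"ottcher and Silbermann, so there is nothing internal to compare against. Your argument is correct and is in fact the standard proof of Coburn's lemma. The kernel conditions do say exactly that $\sigma f$ and $\bar\sigma g$ lie in $\overline{e^{i\theta}\mathbf{H}^2(\mathbb{T})}$; the product $u=\sigma f\,\bar g$ then has Fourier support in the strictly negative integers, while $\bar u=(\bar\sigma g)\bar f$ has the same property, forcing $u=0$ a.e.; and since $\sigma\neq 0$ on a set of positive measure while nonzero $\mathbf{H}^2$ functions are nonzero a.e., one of $f,g$ must vanish. Two small points of hygiene: the coefficient sequences of $\sigma f$ and $\bar g$ are only square-summable, not absolutely summable, so you should justify the convolution formula for $\widehat{u}(n)$ by noting that for $F,G\in\mathbf{L}^2$ each sum $\sum_k\hat F(k)\hat G(n-k)$ converges absolutely by Cauchy--Schwarz (equivalently, compute $\widehat{FG}(n)$ as the inner product $\langle F,\bar G e^{in\theta}\rangle$ and apply Parseval); and the a.e.\ nonvanishing of a nonzero $\mathbf{H}^2$ function is more standardly attributed to the integrability of $\log|f|$ on the boundary (Szeg\H{o}/Riesz) than to the F.\ and M.\ Riesz theorem on measures, though the fact itself is classical and your citation of inner--outer factorization suffices.
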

Here $\bar{\sigma}$ denotes the complex conjugate of $\sigma$.

By imposing the restriction $| \Re\beta | < \frac{1}{2}$ it easily follows from these two theorems that $\sigma \in C_{\beta}$ implies $T[\sigma]$ is invertible on $\mathbf{H}^{2}(\mathbb{T})$. From an application of the \emph{finite section method}, it follows that $T_{n}[\sigma]$ is invertible for $n$ sufficiently large (the main focus of this method being the suitability of $P_{n}T[\sigma]^{-1}P_{n}$ as an approximate inverse for $T_{n}[\sigma]$; see \cite{6}, ch. 3). For what follows we will assume $n$ to be thus sufficiently large.

For $p \le n$ let $X$ denote the $p \times p$ matrix with $(i,j)$ entry $x_{i,j}$ equal to the $(n-p+i+1,j)$ entry of $T_{n}[\sigma]^{-1}$. Jacobi's theorem concerning minors of inverse matrices (extending Cramer's rule, see \cite{12}, p. 20) implies that
\[
\det X = \frac{(-1)^{(n+1)p} \det \tilde{T}}{D_{n}[\sigma]}
\]
where $\tilde{T}$ is the matrix obtained from $T_{n}[\sigma]$ by deleting the last $p$ columns and the first $p$ rows. An easy inspection of these matrices shows that
\[
\tilde{T}=T_{n-p}[z^{-p}\sigma]=(-1)^{-(n-p+1)p}T_{n-p}[(-z)^{-p}\sigma],
\]
so that
\begin{equation}
D_{n-p}[(-z)^{-p}\sigma]=(-1)^{p}\det X \cdot D_{n}[\sigma]. \label{seventh}
\end{equation}

It follows that if a first order asymptotic expression for $\det X$ is found, then a first order asymptotic expansion is obtainable for the determinants of related Toeplitz matrices with symbols not subject to the restriction $| \Re \beta | < \frac{1}{2}$. To this end, since $X$ is a submatrix of $T_{n}[\sigma]^{-1}$, we determine the entries of $X$ by investigating the solution to finite Toeplitz systems of equations.

 \subsection{Wiener-Hopf Factorization}

For a starting point it will be most convenient to consider $T_{n}[\sigma]$ as acting on the space of polynomials in the variable $z$ of degree at most $n$. The equation
\begin{equation}
T_{n}[\sigma]p=q \label{eigth}
\end{equation}
will be taken to mean that
\[
\hat{q}(i)=\sum_{j=0}^{n}\hat{\sigma}(i-j)\hat{p}(j)
\]
where
\[
p(z)=\sum_{i=0}^{n}\hat{p}(i)z^{i} \qquad q(z)=\sum_{i=0}^{n}\hat{q}(i)z^{i}.
\]
Setting $q_{i}(z)=z^{i}$ and $T_{n}[\sigma]p_{i}=q_{i}$ it follows that the $(i,j)$ entry in the matrix $T_{n}[\sigma]^{-1}$ is given by $\widehat{p_{j}}(i)$. We obtain from these definitions and that of $X$ that
\begin{equation}
x_{i,j}=\hat{p}(n-p+1+1). \label{ninth}
\end{equation}
Equation \eqref{ninth} and the condition on $\sigma$ yield the equation
\begin{equation}
\sigma p = q + \phi + z^{n} \psi \label{tenth}
\end{equation}
where $\phi \in \overline{z\mathbf{H}^{1}}$ and $\psi \in z\mathbf{H}^{1}$. (Here $\mathbf{H}^{s}=\{ f \in \mathbf{L}^{s}(\mathbb{T}): n<0 \Rightarrow \hat{f}(n) = 0 \}$, $1\le s \le \infty$; the variable $z$ takes values in $\mathbb{T}$.) The solution of \eqref{tenth} proceeds by means of the \emph{Wiener-Hopf factorization} of $\sigma$ and the introduction of certain projection operators. The notation $f \sim \sum_{i=-\infty}^{\infty} \hat{f}(i)z^{i}$ will be used to denote the representation of a function by its Fourier series.

For $g \in \mathbf{L}^{2}(\mathbb{T})$ define
\[
\begin{aligned}
P^{+}g(z) &= \sum_{i=0}^{\infty} \hat{g}(i)z^{i} \\
P^{-}g(z) &= \sum_{i=-\infty}^{0} \hat{g}(i)z^{i} \\
P_{+}g(z) &= g(z) - P^{-}g(z) \\
P_{-}g(z) &= g(z) - P^{+}g(z) \\
\tilde{g}(z) &= -i\left( P_{+}g(z)-P_{-}g(z) \right)
\end{aligned}
\]

The operator $P^{+}$ is the standard orthogonal projection of $\mathbf{L}^{2}[\mathbb{T}]$ onto $\mathbf{H}^{2}[\mathbb{T}]$. $P^{-}$ is the projection onto $\overline{\mathbf{H}^{2}[\mathbb{T}]}$. The two operators $P_{+}$ and $P_{-}$ are a simple way of excluding zero from sums defining $P^{+}$ and $P^{-}$, respectively. If $g$ satisfies an additional Lipschitz condition with exponent greater than zero, then it follows that $\log g$ and $\widetilde{\log g}$ are continuous (\cite{23}, theorem III.13.27).

Set
\[
g_{\pm} = \exp \left( \frac{1}{2} \left( \log g \pm i\widetilde{\log g} \right) \right),
\]
so that $g = g_{-}g_{+}$, the Wiener-Hopf factorization of $g$. The function $g_{+}$ (respectively, $g_{-}$) extends analytically and is nonzero inside (respectively, outside) the unit circle of the complex plane. Taking the Wiener-Hopf factorization of the function $\tau$ from equation \eqref{fifth} we define
\[
\begin{aligned}
\sigma_{+}(z) &= (1-z)^{\beta}\tau_{+}(z) \\
\sigma_{-}(z) &= (1-z^{-1})^{-\beta}\tau_{-}(z)
\end{aligned}
\]
so that $\sigma=\sigma_{-}\sigma_{+}$ and the function $\sigma_{+}$ (respectively, $\sigma_{-}$) also extends analytically and is nonzero inside (respectively, outside) the unit circle. Equation \eqref{tenth} may now be written as a pair of equations
\begin{equation}
\begin{aligned}
\sigma_{+}p &= \frac{q}{\sigma_{-}}+\frac{\phi}{\sigma_{-}}+\frac{z^{n}\psi}{\sigma{-}} \\
z^{-n}\sigma_{-}p &= \frac{q}{z^{n}\sigma_{+}} + \frac{\phi}{z^{n}\sigma_{+}}+\frac{\psi}{\sigma_{+}}.  \label{eleventh}
\end{aligned}
\end{equation}
The condition $| \Re\beta | < \frac{1}{2}$ implies that $\phi \in \overline{z\mathbf{H}^{2}}$, $\psi \in z\mathbf{H}^{2}$, $\sigma_{+}^{\pm 1} \in \mathbf{H}^{2}$, and $\sigma_{-}^{\pm 1} \in \overline{\mathbf{H}^{2}}$. As $p$ is a polynomial of finite degree, $\sigma_{+}p \in \mathbf{H}^{2}$ and $z^{-1}\sigma_{-}p \in \overline{\mathbf{H}^{2}}$. We apply the operators $P_{-}$ and $P_{+}$ to these equations, obtaining
\[
\begin{aligned}
0 &= P_{-}\left( \frac{q}{\sigma_{-}} \right) + \frac{\phi}{\sigma_{-}} + P_{-}\left( \frac{z^{n}\psi}{\sigma_{-}}\right) \\
0 &= P_{+}\left(\frac{q}{z^{n}\sigma_{+}}\right) + P_{+}\left( \frac{\phi}{z^{n}\sigma_{+}}\right) + \frac{\psi}{\sigma_{+}}.
\end{aligned}
\]
Let $u(z)=\frac{\sigma_{-}(z)}{\sigma_{+}(z)}$ and let $v(z)=1/u(z)$. Setting $z=e^{i\theta}$ we have $u(z)=(2-2\cos\theta)^{-\beta}\frac{\tau_{-}(\theta)}{\tau_{+}(\theta)}$.

Define operators $U$ and $V$ by
\[
\begin{aligned}
U(g) &= P_{+}(z^{-n}ug) \\
V(g) &= P_{-}(z^{n}vg).
\end{aligned}
\]
Our pair of equations can now be written as a matrix equation:
\[
\begin{bmatrix}
I & V \\
U & I
\end{bmatrix} \begin{bmatrix}
\frac{\phi}{\sigma_{-}} \\
\frac{\psi}{\sigma_{+}}
\end{bmatrix} = \begin{bmatrix}
-P_{-}\left( \frac{q}{\sigma{-}} \right) \\
-P_{+} \left( \frac{q}{z^{n}\sigma_{+}} \right)
\end{bmatrix}.
\]
Multiplying on the left by the matrix $\begin{bmatrix} I & -V \\ -U & I \end{bmatrix}$ yields
\[
\begin{bmatrix}
I-VU & O \\
O & I-UV
\end{bmatrix} \begin{bmatrix}
\frac{\phi}{\sigma_{-}} \\
\frac{\psi}{\sigma_{+}}
\end{bmatrix} = \begin{bmatrix}
-P_{-}\left( z^{n}vP^{-}\left( \frac{q}{z^{n}\sigma_{+}} \right) \right) \\
-P_{+} \left( z^{-n}uP^{+}\left( \frac{q}{\sigma_{-}} \right) \right)
\end{bmatrix}.
\]
This last equation has a solution if the matrix on the left is invertible, which in turn yields a solution of \eqref{tenth} for $p$. As it happens, we need only consider the invertibility of $I-VU$, namely the solution of the equation
\begin{equation}
\left( I-VU \right) \left( \frac{\phi}{\sigma_{-}} \right) = -P_{-}\left( z^{n}vP^{-}\left( \frac{q}{z^{n}\sigma_{+}} \right) \right). \label{twelfth}
\end{equation}
In keeping with our identification of $\mathbf{L}^{2}$ functions with their Fourier series, the above equation can be interpreted as a semi-infinite matrix equation on the space of series indexed by the negative integers. The operator $VU$ has the matrix representation with $(i,j)$ entry
\begin{equation}
(VU)_{i,j}=\sum_{k=1}^{\infty}\hat{u}(k+n-j)\hat{v}(i-n-k), \label{thirteenth}
\end{equation}
the convergence of the series depending on the restriction $| \Re\beta | < \frac{1}{2}$.

The estimation of $\det X$ is obtained from this information in two steps. The first step consists of finding a complete asymptotic expansion for the entries $x_{i,j}$ of $X$ as $n \to \infty$. The second step is the use of this expansion to find a first order expression for $\det X$. To achieve step one we first approximate $VU$ by an integral operator acting on a particular function space, the approximation being in the context of finding an estimate for \eqref{twelfth} and relying on a simple identification of a sequence of complex numbers with a function on the real line that is constant between consecutive integers. Under this identification a matrix acting on sequences behaves like an integral operator with kernel consisting of a function in the plane which is constant on squares with unit length edges and integer-valued coordinate vertices. The operator $I-VU$ is first approximated by an operator with more easily obtainable asymptotic information, and the approximation is then improved using a Neumann expansion and the Euler-Maclaurin summation formula. By keeping track of pertinent details of the resulting asymptotic expansions of the entries of $X$ a relatively straightforward attack on $\det X$ is possible, yielding a solution for step two.

\section{Invertibility of $I-VU$}
We start with a consideration of the asymptotics for the entries in the matrix $VU$.
\newline

\begin{lemma}
As $n \to \infty$ we have
\newline
\newline
i) $\hat{u}(n) \sim \sum_{m=0}^{\infty} c_{m} n^{-1+2\beta-m}$
\newline
\newline
ii) $\hat{v}(-n) \sim \sum_{m=0}^{\infty} c_{m}' n^{-1-2\beta-m}$
\newline
\end{lemma}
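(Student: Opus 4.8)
The plan is to compute the asymptotics of the Fourier coefficients $\hat u(n)$ and $\hat v(-n)$ by writing $u(z) = (2-2\cos\theta)^{-\beta}\,\tau_-(\theta)/\tau_+(\theta)$ as a product of an explicitly understood singular factor and a smooth factor, and then extracting the asymptotic contribution of the singularity at $\theta = 0$. Write $u(e^{i\theta}) = (2-2\cos\theta)^{-\beta} h(e^{i\theta})$ where $h = \tau_-/\tau_+$. By condition (iv) on $\tau$ and the continuity of the Wiener--Hopf factors (which follows from the Lipschitz/smoothness hypotheses as noted in the excerpt via \cite{23}), $h$ is $C^\infty$ on $\mathbb{T}\setminus\{1\}$ with one-sided derivatives of all orders at $\theta = 0$. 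The only obstruction to rapid decay of $\hat u(n)$ is therefore the factor $(2-2\cos\theta)^{-\beta} = |1-e^{i\theta}|^{-2\beta}$, whose Fourier coefficients are classically known: $\widehat{(2-2\cos\theta)^{-\beta}}(n) = \Gamma(1-2\beta)/[\Gamma(1-\beta+n)\Gamma(1-\beta-n)]\cdot(\text{sign factors})$, equivalently $\sim \mathrm{const}\cdot n^{-1+2\beta}$ by Stirling, with a full asymptotic expansion in powers of $n^{-1}$.

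The key steps, in order, are as follows. First, localize: fix a smooth cutoff $\chi$ supported near $\theta = 0$ and equal to $1$ on a smaller neighborhood; then $u = \chi u + (1-\chi)u$, and since $(1-\chi)u$ is $C^\infty$ on all of $\mathbb{T}$, its Fourier coefficients are $O(n^{-N})$ for every $N$, hence absorbed into the error of the claimed expansion. Second, on the support of $\chi$ expand the smooth factor $h(e^{i\theta})$ in its Taylor series at $\theta = 0$ — being careful to use the one-sided Taylor expansions, or equivalently to treat $\theta \in (0,2\pi)$ with the branch conventions fixed in Section 2.1 — and likewise expand $(2-2\cos\theta)^{-\beta} = \theta^{-2\beta}(1 + a_1\theta^2 + \cdots)^{-\beta} = \theta^{-2\beta}\sum_{k\ge 0} b_k\theta^{2k}$. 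Multiplying these series gives $\chi(\theta)u(e^{i\theta}) = \sum_{m\ge 0} d_m\,\theta^{-2\beta+m}\chi(\theta) + (\text{smooth})$, formally, and one then invokes the standard Fourier-coefficient asymptotics for $\theta^{-2\beta+m}\chi(\theta)$: each such term contributes $\mathrm{const}_m\cdot n^{-1+2\beta-m}(1+O(1/n))$, with the leading constant given by the Fourier transform of the homogeneous distribution $\theta^{-2\beta+m}$ on the line. Collecting terms and matching powers of $n$ yields part (i), with the $c_m$ expressed through the Taylor coefficients of $\tau_-/\tau_+$ at $1$ and the Barnes-type constants from the $|1-z|^{-2\beta}$ kernel. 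Part (ii) is entirely analogous, with $v = 1/u = (2-2\cos\theta)^{\beta}\tau_+/\tau_-$, so $\beta \mapsto -\beta$ and $h \mapsto 1/h$, giving the conjugate coefficients $c_m'$ and the exponent $n^{-1-2\beta-m}$.

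The main obstacle is making the asymptotic-series manipulation rigorous, i.e.\ justifying that truncating the Taylor/binomial expansions at order $M$ produces a remainder whose Fourier coefficients are genuinely $o(n^{-1+2\Re\beta-M})$ rather than merely formally of that order. The clean way to do this is to control the remainder after $M$ terms as a function that is $C^{M'}$ away from $\theta=0$ with a controlled singularity of type $\theta^{-2\beta+M+1}$ (times a bounded factor with enough one-sided smoothness), and to apply the Erdélyi-type lemma on Fourier coefficients of functions with an algebraic singularity — one integration-by-parts argument on $\int e^{-in\theta}\theta^{-2\beta+m}\chi(\theta)\,d\theta$, pushing the contour or using the known Fourier transform of $x_+^{s}$, suffices for each fixed order and the errors telescope. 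A secondary bookkeeping point is the dependence on $\Re\beta$: for $|\Re\beta| < \tfrac12$ every series in \eqref{thirteenth} converges and the leading exponent $-1+2\Re\beta$ lies in $(-2,0)$, so finitely many terms of the expansion already beat any negative power one needs; one simply records that the implied constants and the first correction are uniform on compact subsets of the relevant strip, which is what later applications (e.g.\ Vitali's theorem) will require.
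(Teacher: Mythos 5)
Your proposal is correct and follows essentially the same route as the paper, which disposes of this lemma in one line by citing Erd\'{e}lyi's method of integration by parts (Bleistein--Handelsman, pp.~89--91): localizing at the singular point $z=1$, splitting off the smooth factor $\tau_{-}/\tau_{+}$, and expanding the algebraic singularity $(2-2\cos\theta)^{-\beta}\sim\theta^{-2\beta}\sum b_{k}\theta^{2k}$ term by term is exactly what that method carries out. Your extra care about the one-sided smoothness of the Wiener--Hopf factors at $\theta=0$ and about uniformity of the constants in $\Re\beta$ goes beyond what the paper records, but is consistent with it.
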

These expansions follow directly from Erd\'{e}lyi's method of integration by parts (see, for example, \cite{4}, pp. 89-91).
\newline

\begin{terms}
For $x \in \mathbb{R}$ let $\{x\}$ denote the smallest integer greater than or equal to $x$, called the \emph{ceiling} of $x$.
\newline
\end{terms}
Let $M: \mathbb{R}^{2} \rightarrow \mathbb{C}$ be given by
\begin{equation}
M(x,y)=VU_{\{-x\},\{-y\}}. \label{fourteenth}
\end{equation}
\newline
We formally define the integral operator
\[
\mathbf{M}f(x)=\int_{0}^{\infty} M(x,y) dy.
\]
\newline

\begin{lemma}
As $n \rightarrow \infty$ and for any $\delta > 0$ we have
\[
\begin{aligned}
M(x,y) &= c_{0}c_{0}'\int_{0}^{\infty} (n+\{x\}+z)^{-1-2\beta}(n+\{y\}+z)^{-1+2\beta}dz \\
 &+ o\left( (n+\{x\})^{-\frac{1}{2}-\beta-\delta} (n+\{y\})^{-\frac{1}{2}+\beta-\delta} \right).
\end{aligned}
\]
\newline
\end{lemma}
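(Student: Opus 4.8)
The argument starts from the exact formula \eqref{thirteenth}. With $i=\{-x\}$ and $j=\{-y\}$ (both $\le 0$, since $x,y>0$) we have
\[
M(x,y)=\sum_{k=1}^{\infty}\hat u(k+n-j)\,\hat v(i-n-k),
\]
and since $k+n-j\ge n+1$ and $n+k-i\ge n+1$ for every $k\ge1$, both arguments tend to $\infty$ with $n$ uniformly in $x,y$, so Lemma 1 may be applied termwise. Writing $\hat u(N)=c_{0}N^{-1+2\beta}+r_{u}(N)$ and $\hat v(-N)=c_{0}'N^{-1-2\beta}+r_{v}(N)$, the expansions of Lemma 1 give $r_{u}(N)=O(N^{-2+2\Re\beta})$ and $r_{v}(N)=O(N^{-2-2\Re\beta})$. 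Multiplying out, $M(x,y)$ becomes a principal series $c_{0}c_{0}'\sum_{k\ge1}(k+n-j)^{-1+2\beta}(n+k-i)^{-1-2\beta}$ plus three remainder series in which at least one of the two factors carries an $r_{u}$ or $r_{v}$.

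\textbf{Reduction to an integral estimate.} The plan is to treat the principal series and all three remainder series uniformly by replacing each sum $\sum_{k\ge1}g(k)$ of products of (complex) powers by $\int_{0}^{\infty}g(z)\,dz$ via the Euler--Maclaurin summation formula (for the remainder series a crude monotonicity comparison already suffices; for the principal series one needs the full formula). The boundary and derivative corrections Euler--Maclaurin produces are again products of powers, of total degree at least one smaller than that of the integral $\int_{0}^{\infty}g(z)\,dz$, and the tail remainder is governed by $\int_{0}^{\infty}|g^{(2m)}(z)|\,dz$, which differentiating the power-law factors makes smaller still. Everything therefore reduces to bounding integrals $\int_{0}^{\infty}(A+z)^{p}(B+z)^{q}\,dz$ with $\Re p,\Re q<0$ and $\Re(p+q)<-1$, where $A$ and $B$ agree with $n+\{y\}$ and $n+\{x\}$ up to a bounded shift; here the index shift from the negative-integer labels $i,j$ to the ceilings $\{x\},\{y\}$ of the statement is absorbed.

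\textbf{The integral estimate and conclusion.} The elementary lemma to establish, by splitting the $z$-integral at $\max(A,B)$ and using $|\Re\beta|<\tfrac12$, is that $\int_{0}^{\infty}(A+z)^{p}(B+z)^{q}\,dz$ is at most a constant depending only on $p,q$ times a product of a power of $A$ and a power of $B$ of total degree $\Re(p+q)+1$. For the principal series $p=-1+2\beta$, $q=-1-2\beta$, and Euler--Maclaurin delivers precisely $c_{0}c_{0}'\int_{0}^{\infty}(n+\{y\}+z)^{-1+2\beta}(n+\{x\}+z)^{-1-2\beta}\,dz$, the asserted leading term; for every remainder series $\Re(p+q)\le -3$, so each such contribution gains a factor $\min(n+\{x\},n+\{y\})^{-1}$ over the leading integral, which then places it inside the error term claimed in the lemma. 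Summing the finitely many pieces gives the stated expansion.

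\textbf{Main obstacle.} The crux is making the estimate for $\int_{0}^{\infty}(A+z)^{p}(B+z)^{q}\,dz$ genuinely uniform over all admissible $(A,B)$ — especially in the regimes $A\gg B$ and $B\gg A$, where replacing each factor by its value at $z=0$ is wrong and one must instead balance the two factors against one another — and, in tandem, keeping the Euler--Maclaurin corrections and remainders uniform, which forces one to track how the constants in Lemma 1 and in the derivative bounds depend on $\beta$. Once that uniform integral bound is secured, separating the leading term from the lower-order contributions is bookkeeping.
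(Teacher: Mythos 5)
Your proposal is correct and follows essentially the same route as the paper's proof, which simply invokes Lemma 1, the definition \eqref{fourteenth} of $M$, and the Euler--Maclaurin summation formula (deferring the computation to the author's thesis): expand $\hat{u}$ and $\hat{v}$ termwise, convert the principal series into the stated integral via Euler--Maclaurin, and absorb the three remainder series together with the Euler--Maclaurin corrections (all of total degree at most $-2$ in $n+\{x\}$, $n+\{y\}$) into the error term, with the uniformity over the regimes $n+\{x\}\gg n+\{y\}$ and vice versa being the only delicate point, exactly as you flag. Note only that your argument, like any correct one, yields the error bound for all sufficiently small $\delta>0$ (one needs $\delta<\tfrac{1}{2}-|\Re\beta|$ for the exponent redistribution) rather than literally for every $\delta>0$ as the statement reads, but this is all that is used in Lemma 4.
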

\begin{proof}
Apply Lemma 1, \eqref{fourteenth}, and the Euler-Maclaurin summation formula (\cite{19}, pp. 127-128; the calculation is carried out in full in \cite{14}, pp. 17-18).
\newline
\end{proof}

\begin{terms}
We make use of the following function spaces and their norms:
\newline
\newline
i) $\mathbf{L}^{2,\beta} (0,\infty) = \{ f(x): (1+x)^{-\beta} f(x) \in \mathbf{L}^{2} (0,\infty) \}$
\newline
$\| f(x) \|_{2,\beta} = \| (1+x)^{-\beta} f(x) \|_{2}$
\newline
\newline
ii) $\mathbf{L}^{2,\beta,n} (0,\infty) = \{ f(x): f(nx) \in \mathbf{L}^{2,\beta} (0,\infty) \}$
\newline
$\| f(x) \|_{2,\beta,n} = \| f(nx) \|_{2,\beta}$
\newline
\end{terms}
$\mathbf{L}^{2,\beta}$ and $\mathbf{L}^{2,\beta,n}$ with the given norms are easily shown to be Banach spaces.
\newline

\begin{terms}
Let
\[
K(x,y)=c_{0}c_{0}' \int_{0}^{\infty}(n+x+z)^{-1-2\beta}(n+y+z)^{-1+2\beta}dz.
\]
We define the following operators on $\mathbf{L}^{2,-\beta,n}(0,\infty)$:
\newline
\[
\begin{aligned}
\mathbf{K}f(x) &= \int_{0}^{\infty}K(x,y)f(y)dy \\
\mathbf{K_{e}}f(x) &= \int_{0}^{\infty} [M(x,y)-K(x,y)]f(y)dy.
\end{aligned}
\]
\newline
\end{terms}

\begin{lemma}
$\mathbf{I}-\mathbf{K}$ is a bounded invertible operator on $\mathbf{L}^{2,-\beta,n}(0,\infty)$. The norm of $\mathbf{I}-\mathbf{K}$ does not depend on $n$.
\newline
\end{lemma}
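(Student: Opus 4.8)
The strategy is to strip away the parameter $n$ by a dilation and then recognize $\mathbf{I}-\mathbf{K}$ as a Wiener--Hopf operator whose symbol can be written down explicitly. First I would use the dilation $(\Lambda_{n}f)(x)=f(nx)$, which by the very definition of the norms is an isometric isomorphism of $\mathbf{L}^{2,-\beta,n}(0,\infty)$ onto $\mathbf{L}^{2,-\beta}(0,\infty)=\mathbf{L}^{2,-\beta,1}(0,\infty)$. Substituting $y=nt$ and $z=nw$ in the kernel $K$ gives $K(ns,nt)=n^{-1}K_{1}(s,t)$, where $K_{1}$ denotes $K$ with $n$ set equal to $1$; hence $\Lambda_{n}\mathbf{K}\Lambda_{n}^{-1}=\mathbf{K}_{1}$, the $n$-independent integral operator with kernel $K_{1}$ on $\mathbf{L}^{2,-\beta}(0,\infty)$. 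Therefore $\|\mathbf{I}-\mathbf{K}\|=\|\mathbf{I}-\mathbf{K}_{1}\|$ for every $n$, and it suffices to prove that $\mathbf{I}-\mathbf{K}_{1}$ is bounded and invertible on $\mathbf{L}^{2,-\beta}(0,\infty)$.

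Next I would conjugate $\mathbf{K}_{1}$ to a convolution operator. Multiplication by $(1+x)^{\beta}$ is an isometry of $\mathbf{L}^{2,-\beta}(0,\infty)$ onto $\mathbf{L}^{2}(0,\infty)$, and the substitution $1+x=e^{\xi}$, with the Jacobian factor $e^{\xi/2}$ included, is an isometry onto $\mathbf{L}^{2}(0,\infty)$ in the variable $\xi$. The computational crux is the identity
\[
\int_{0}^{\infty}(a+w)^{-1-2\beta}(b+w)^{-1+2\beta}\,dw=a^{-1}\Phi(b/a),\qquad \Phi(\rho)=\int_{0}^{\infty}(1+r)^{-1-2\beta}(\rho+r)^{-1+2\beta}\,dr,
\]
by which $K_{1}(s,t)$ depends on $s,t$ only through the ratio $(1+t)/(1+s)$; pushing $K_{1}$ through the two isometries converts $\mathbf{K}_{1}$ into the operator on $\mathbf{L}^{2}(0,\infty)$ with kernel $\Psi(\xi-\eta)$, where $\Psi(\zeta)=c_{0}c_{0}'\,e^{(\beta-\frac12)\zeta}\Phi(e^{-\zeta})$. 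Elementary estimates of $\Phi$ near $0$ and near $\infty$ show that $\Psi$ decays exponentially at both ends precisely when $|\Re\beta|<\frac12$, so $\Psi\in\mathbf{L}^{1}(\mathbb{R})$; Young's inequality then gives boundedness of $\mathbf{K}_{1}$ (hence the $n$-independence of the norm) and identifies $\mathbf{I}-\mathbf{K}_{1}$ as a Wiener--Hopf operator on $\mathbf{L}^{2}(0,\infty)$ with continuous symbol $1-\widehat{\Psi}$.

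For invertibility I would appeal to the classical theory of Wiener--Hopf operators with continuous symbol (\cite{6}): such an operator is Fredholm if and only if its symbol is nonvanishing on $\mathbb{R}\cup\{\infty\}$, and is then invertible if and only if the winding number of the symbol is zero. Evaluating $\widehat{\Psi}$ as a Mellin transform --- inserting the definition of $\Phi$, applying Fubini, and computing two Beta integrals (all convergent exactly because $|\Re\beta|<\frac12$) --- gives
\[
\widehat{\Psi}(\lambda)=\frac{c_{0}c_{0}'}{\Gamma(1-2\beta)\Gamma(1+2\beta)}\,\Gamma(s)\Gamma(1-s)\Gamma(s+2\beta)\Gamma(1-2\beta-s),\qquad s=\tfrac12-\beta+i\lambda.
\]
Using the reflection formula and the explicit leading coefficient $c_{0}=\frac{\tau_{-}(1)}{\tau_{+}(1)}\frac{\Gamma(1-2\beta)}{\Gamma(\beta)\Gamma(1-\beta)}$ coming from Lemma 1, together with the analogous expression for $c_{0}'$ (whose $\tau_{\pm}(1)$-factors cancel those of $c_{0}$, so that $c_{0}c_{0}'=-\beta\tan(\pi\beta)/\pi$), this collapses to
\[
\widehat{\Psi}(\lambda)=\frac{-\sin^{2}(\pi\beta)}{\cos^{2}(\pi\beta)+\sinh^{2}(\pi\lambda)},\qquad 1-\widehat{\Psi}(\lambda)=\frac{\cosh^{2}(\pi\lambda)}{\cos^{2}(\pi\beta)+\sinh^{2}(\pi\lambda)}.
\]
Since $|\Re\beta|<\frac12$ keeps $\cos(\pi\beta)$ both nonzero and not purely imaginary, $\cos^{2}(\pi\beta)$ lies off the ray $(-\infty,0]$; thus the denominator traces the ray $\cos^{2}(\pi\beta)+[0,\infty)$, which avoids the origin, the numerator is positive, the symbol tends to $1$ at $\infty$, and the resulting curve has winding number zero. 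Hence $\mathbf{I}-\mathbf{K}_{1}$, and therefore $\mathbf{I}-\mathbf{K}$, is invertible.

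The main obstacle is the explicit evaluation of $\widehat{\Psi}$: identifying $c_{0}c_{0}'$ from Lemma 1 and carrying the Gamma-function algebra through without sign errors is the bulk of the work, and the closing nonvanishing assertion is precisely where the hypothesis $|\Re\beta|<\frac12$ enters essentially --- at $\Re\beta=\pm\frac12$ the quantity $\cos(\pi\beta)$ becomes purely imaginary and $1-\widehat{\Psi}$ vanishes at $\lambda=0$, so the operator fails to be invertible there, in agreement with the breakdown of the invertibility of $T[\sigma]$ at those $\beta$. (If one wished to avoid computing $c_{0}c_{0}'$, the nonvanishing and vanishing winding number of the symbol could instead be deduced from the already-noted invertibility of the finite sections $T_{n}[\sigma]$ via the finite section method, but the direct computation additionally yields the convenient bound $\|\mathbf{I}-\mathbf{K}\|\le 1+\|\Psi\|_{1}$.)
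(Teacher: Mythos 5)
Your proof is correct and follows essentially the same route as the paper's: your composite isometry $f\mapsto e^{(\frac12+\beta)\xi}f\bigl(n(e^{\xi}-1)\bigr)$ is exactly the paper's conjugating operator $\mathbf{A}$, and your symbol $-\sin^{2}(\pi\beta)/(\cos^{2}(\pi\beta)+\sinh^{2}(\pi\lambda))$ coincides with the paper's $-\sin^{2}\pi\beta/(\cosh^{2}\pi\xi-\sin^{2}\pi\beta)$, after which both arguments conclude via nonvanishing and zero winding number of $1-\hat{k}$. The differences are cosmetic: you split $\mathbf{A}$ into a dilation, a weight, and an exponential change of variables, and you get boundedness from $\Psi\in\mathbf{L}^{1}$ where the paper bounds the operator norm by $\|\hat{k}\|_{\infty}$.
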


\begin{proof}
Let $\mathbf{\tilde{K}}=\mathbf{AKA^{-1}}$, where
\[
\mathbf{A}g(x)=e^{(\frac{1}{2}+\beta)x}g\left( n(e^{x}-1)\right).
\]
Direct calculation shows that
\[
\mathbf{\tilde{K}}f(x)=\int_{0}^{\infty}k(x-y)f(y)dy,
\]
where
\[
k(x)=c_{0}c_{0}' e^{\left( \frac{1}{2}+\beta \right) x} \int_{0}^{\infty}(z+1)^{-1+2\beta}(z+e^{x})^{-1-2\beta}dz.
\]
Calculation also shows that $\mathbf{A}$ is a norm-preserving linear isomorphism of $\mathbf{L}^{2,-\beta,n}$ onto $\mathbf{L}^{2}$, and that $\mathbf{\tilde{K}}$ is a \emph{Wiener-Hopf operator} (namely, an operator of the form $\mathbf{W}[\sigma]f= \mathcal{F}^{-1}P(\sigma\mathcal{F}f)$, where $\mathcal{F}$ denotes the Fourier transform; see \cite{22}, p. 111) with symbol given by the Fourier transform of $k$:
\[
\hat{k}(\xi)=c_{0}c_{0}' \frac{\pi^{2}\csc\left( \pi\left(\frac{1}{2}-\beta+i\xi\right) \right) \csc \left( \pi\left( \frac{1}{2}-\beta-i\xi\right) \right)}{\Gamma(1+2\beta)\Gamma(1-2\beta)}.
\]
The values of the constants $c_{0}$ and $c_{0}'$ are useful at this point, being obtained from an integration by parts in each case:
\[
\begin{aligned}
c_{0} &= \frac{\Gamma(1-2\beta) \sin\pi\beta}{\pi} \cdot \frac{\tau_{-}(1)}{\tau_{+}(1)} \\
c_{0}' &= \frac{\Gamma(1+2\beta) \sin\pi\beta}{\pi} \cdot \frac{\tau_{+}(1)}{\tau_{-}(1)}
\end{aligned}
\]
These formulas yield
\[
\hat{k}(\xi)=-\frac{\sin^{2}\pi\beta}{\cosh^{2}\pi\xi - \sin^{2}\pi\beta}.
\newline
\]

Since $| \Re\beta | < \frac{1}{2}$ it follows that $\| \mathbf{\tilde{K}} \|_{2} \le \| \hat{k} (\xi) \|_{\infty} < \infty$, implying that $\mathbf{\tilde{K}}$ is a bounded operator. Also, the curve $\{1-\hat{k}(\xi): \xi \in \mathbb{R} \}$ never vanishes and has winding number zero about the origin. These facts imply that the operator $\mathbf{I}-\mathbf{\tilde{K}}$ is invertible on $\mathbf{L}^{2}(0,\infty)$ and consequently that the operator $\mathbf{I}-\mathbf{K}$ is invertible on $\mathbf{L}^{2,-\beta,n}$ (see \cite{13}, p. 41).

Finally, the norm of $\mathbf{I}-\mathbf{K}$ is seen to be independent of $n$ since $A$ is norm-preserving and the norm of $\mathbf{\tilde{K}}$ is independent of $n$.
\newline
\end{proof}

\begin{lemma}
$\| \mathbf{K_{e}} \|_{2,-\beta,n} = o(n^{-\delta})$.
\newline
\end{lemma}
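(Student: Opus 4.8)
The plan is to bound $\|\mathbf{K_e}\|_{2,-\beta,n}$ by transporting $\mathbf{K_e}$ to $\mathbf{L}^{2}(0,\infty)$ via the norm-preserving isomorphism $\mathbf{A}$ introduced in the proof of Lemma 5, and there to dominate, in absolute value, the kernel of the transported operator by a fixed integrable kernel times a scalar tending to $0$ faster than $n^{-\delta}$. Since $\mathbf{A}$ is an isometry, $\|\mathbf{K_e}\|_{2,-\beta,n}=\|\mathbf{A}\mathbf{K_e}\mathbf{A}^{-1}\|_{2}$, and the substitution $x\mapsto n(e^{x}-1)$ already used in Lemma 5 shows that $\mathbf{A}\mathbf{K_e}\mathbf{A}^{-1}$ is the integral operator on $\mathbf{L}^{2}(0,\infty)$ with kernel
\[
\tilde{k}_{e}(x,y)=n\,e^{(\frac12+\beta)x}\,e^{(\frac12-\beta)y}\,\bigl[M-K\bigr]\!\left(n(e^{x}-1),\,n(e^{y}-1)\right).
\]

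To estimate $M-K$ I would split $M(x,y)-K(x,y)=\bigl[M(x,y)-K(\{x\},\{y\})\bigr]+\bigl[K(\{x\},\{y\})-K(x,y)\bigr]$. The first bracket is precisely what Lemma 3 controls: uniformly in $x,y$ it is $o\!\left((n+\{x\})^{-\frac12-\beta-\delta}(n+\{y\})^{-\frac12+\beta-\delta}\right)$. For the second bracket, since $0\le\{x\}-x<1$ and $0\le\{y\}-y<1$, the mean value theorem applied to the jointly smooth dependence of $K$ on its two arguments produces an extra factor $(n+x+z)^{-1}$ or $(n+y+z)^{-1}$ inside the defining integral, hence
\[
\bigl|K(\{x\},\{y\})-K(x,y)\bigr|\;\lesssim\;n^{-1}\int_{0}^{\infty}(n+x+z)^{-1-2\Re\beta}(n+y+z)^{-1+2\Re\beta}\,dz .
\]
Now pass to the conjugated variables, using $n+\{n(e^{x}-1)\}\asymp n+n(e^{x}-1)=ne^{x}$ (and likewise in $y$) together with $|\Re\beta|<\frac12$. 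The Lemma 3 remainder contributes to $|\tilde{k}_{e}(x,y)|$ a term $\lesssim \varepsilon_{n}\,n\,e^{(\frac12+\Re\beta)x}e^{(\frac12-\Re\beta)y}(ne^{x})^{-\frac12-\Re\beta-\delta}(ne^{y})^{-\frac12+\Re\beta-\delta}=\varepsilon_{n}\,n^{-2\delta}\,e^{-\delta x}e^{-\delta y}$, with $\varepsilon_{n}\to0$; the ceiling--discrepancy bracket contributes to $|\tilde{k}_{e}(x,y)|$ a term $\lesssim n^{-1}\kappa(x-y)$, where $\kappa$ is the real-$\beta$ analogue of the convolution kernel $k$ of Lemma 5 (it arises by running the same change of variables, now with $\Re\beta$ in place of $\beta$), and one checks as in Lemma 5 that $\kappa$ decays exponentially at $\pm\infty$, so $\kappa\in\mathbf{L}^{1}(\mathbb{R})$.

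Assembling these two estimates, the first piece of $\tilde{k}_{e}$ defines a Hilbert--Schmidt operator of norm $\lesssim \varepsilon_{n}n^{-2\delta}$, and the second piece, by Young's inequality for convolutions, an operator of norm $\lesssim n^{-1}\|\kappa\|_{1}$. Consequently
\[
\|\mathbf{K_e}\|_{2,-\beta,n}=\|\mathbf{A}\mathbf{K_e}\mathbf{A}^{-1}\|_{2}\;\lesssim\;\varepsilon_{n}n^{-2\delta}+n^{-1}\;=\;o(n^{-\delta}),
\]
as claimed.

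The step I expect to be the real obstacle is not the bookkeeping above but securing its uniformity: one needs the remainder in Lemma 3 to be uniform in $x,y\in(0,\infty)$, and one must check that the exponential growth $e^{(\frac12\pm\beta)x}$ introduced by $\mathbf{A}$ is exactly absorbed by the decay of $M-K$. This cancellation is tight and uses $|\Re\beta|<\frac12$ in an essential way (the imaginary part of $\beta$ only contributes an oscillatory factor of modulus $1$ and is harmless). A minor additional point is the regime of small $x$ or $y$, where $n(e^{x}-1)$ need not be of order $n$; there $n+\{n(e^{x}-1)\}$ is still comparable to $ne^{x}$, so the estimates persist unchanged.
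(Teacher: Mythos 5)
Your argument is correct, and it reaches the bound by a genuinely different route than the paper. The paper stays in the weighted space: it asserts directly that the kernel of $\mathbf{K_{e}}$ is $o\left((n+x)^{-\frac{1}{2}-\beta-\delta}(n+y)^{-\frac{1}{2}+\beta-\delta}\right)$ (absorbing both the Euler--Maclaurin remainder \emph{and} the discrepancy between $K(\{x\},\{y\})$ and $K(x,y)$ into that single estimate ``from the definition and from Lemma 2''), and then applies a weighted Schwarz inequality $\| fg \|_{1} \le \| f \|_{2,\beta,n} \| g \|_{2,-\beta,n}$ to get $\|\mathbf{K_{e}}\| \le c' n^{-2\delta}$ in one computation. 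You instead conjugate by the isometry $\mathbf{A}$ (introduced in the proof of Lemma 3, not Lemma 5 as you cite; likewise your ``Lemma 3'' for the kernel estimate is the paper's Lemma 2), split the kernel into the Euler--Maclaurin remainder plus the ceiling discrepancy, and bound the two transported pieces by a Hilbert--Schmidt norm and by Young's inequality for the convolution kernel $\kappa$. What your version buys is an explicit treatment of the $K(\{x\},\{y\})-K(x,y)$ term, which the paper elides, and a cleaner separation of where $|\Re\beta|<\frac{1}{2}$ enters (integrability of $e^{-\delta x-\delta y}$ and of $\kappa$); what it costs is the extra bookkeeping of the change of variables and a final bound $\varepsilon_{n}n^{-2\delta}+Cn^{-1}$, which is $o(n^{-\delta})$ only for $\delta<1$ --- harmless here since $\delta$ is a small parameter inherited from Lemma 2, but worth stating, whereas the paper's $c'n^{-2\delta}$ is $o(n^{-\delta})$ for every $\delta>0$. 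The one genuine hypothesis both proofs lean on equally is the uniformity in $x,y$ of the $o(\cdot)$ in Lemma 2, which you correctly identify as the crux.
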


\begin{proof}
The kernel $K_{e}(x,y)=o \left( (n+x)^{-\frac{1}{2}-\beta-\delta}(n+y)^{-\frac{1}{2}+\beta-\delta} \right)$, from the definition and from Lemma 2. The Schwarz inequality for the spaces $\mathbf{L}^{2,\beta,n}$ is given by
\[
\| fg \|_{1} \le \| f \|_{2,\beta,n} \| g \|_{2,-\beta,n},
\]
hence
\[
\begin{aligned}
\| \mathbf{K_{e}} f(x) \|_{2,-\beta,n} &= \| (1+x)^{\beta} \mathbf{K_{e}}f(nx) \|_{2} \\
&\le \int_{0}^{\infty} (1+x)^{\beta} (n+nx)^{-\frac{1}{2}-\beta-\delta} dx \cdot n^{\frac{1}{2}+\beta-\delta} \cdot \| f \|_{2,-\beta,n} \\
&= c'n^{-2\delta} \|f \|_{2,-\beta,n}.
\end{aligned}
\]
\newline
\end{proof}

\begin{proposition}
$\mathbf{I}-\mathbf{M}$ is invertible on $\mathbf{L}^{2,-\beta,n}$ for $n$ sufficiently large.
\newline
\end{proposition}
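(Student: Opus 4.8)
The plan is to realize $\mathbf{I}-\mathbf{M}$ as a small perturbation of the operator $\mathbf{I}-\mathbf{K}$, which was shown to be boundedly invertible in Lemma 3. Since the kernel of $\mathbf{K_{e}}$ is by definition $M(x,y)-K(x,y)$, one has the exact operator identity $\mathbf{M}=\mathbf{K}+\mathbf{K_{e}}$ on $\mathbf{L}^{2,-\beta,n}(0,\infty)$; in particular $\mathbf{M}$ is bounded (Lemmas 3 and 4) and
\[
\mathbf{I}-\mathbf{M}=(\mathbf{I}-\mathbf{K})-\mathbf{K_{e}}=(\mathbf{I}-\mathbf{K})\bigl(\mathbf{I}-(\mathbf{I}-\mathbf{K})^{-1}\mathbf{K_{e}}\bigr).
\]
Since $\mathbf{I}-\mathbf{K}$ is invertible, it suffices to prove that the second factor is invertible for $n$ sufficiently large.

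First I would record that not only $\|\mathbf{I}-\mathbf{K}\|$ but also $\|(\mathbf{I}-\mathbf{K})^{-1}\|_{2,-\beta,n}$ is bounded independently of $n$. This is immediate from the proof of Lemma 3: there $\mathbf{I}-\mathbf{K}=\mathbf{A}^{-1}(\mathbf{I}-\tilde{\mathbf{K}})\mathbf{A}$, where $\mathbf{A}$ is a norm-preserving isomorphism of $\mathbf{L}^{2,-\beta,n}$ onto $\mathbf{L}^{2}(0,\infty)$ and $\tilde{\mathbf{K}}$ is the Wiener-Hopf operator whose symbol $1-\hat{k}(\xi)$ does not depend on $n$. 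Hence $(\mathbf{I}-\mathbf{K})^{-1}=\mathbf{A}^{-1}(\mathbf{I}-\tilde{\mathbf{K}})^{-1}\mathbf{A}$ and $\|(\mathbf{I}-\mathbf{K})^{-1}\|_{2,-\beta,n}=\|(\mathbf{I}-\tilde{\mathbf{K}})^{-1}\|_{2}=:C<\infty$, a constant independent of $n$.

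Then, by Lemma 4, $\|\mathbf{K_{e}}\|_{2,-\beta,n}=o(n^{-\delta})$, so
\[
\bigl\|(\mathbf{I}-\mathbf{K})^{-1}\mathbf{K_{e}}\bigr\|_{2,-\beta,n}\le C\cdot o(n^{-\delta})\longrightarrow 0 \quad\text{as } n\to\infty.
\]
Choose $N$ so that this norm is $<1$ for every $n\ge N$. Then the Neumann series $\sum_{j\ge 0}\bigl((\mathbf{I}-\mathbf{K})^{-1}\mathbf{K_{e}}\bigr)^{j}$ converges in the operator norm of $\mathbf{L}^{2,-\beta,n}$ and furnishes a bounded inverse of $\mathbf{I}-(\mathbf{I}-\mathbf{K})^{-1}\mathbf{K_{e}}$. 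Consequently $\mathbf{I}-\mathbf{M}$, being a composition of two boundedly invertible operators, is invertible on $\mathbf{L}^{2,-\beta,n}$ for all $n\ge N$, which is the assertion.

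No step here is genuinely difficult — the analytic substance is entirely contained in Lemmas 3 and 4. The one point that must not be glossed over is the uniformity claim in the second paragraph: Lemma 3 as stated controls $\|\mathbf{I}-\mathbf{K}\|$, whereas the perturbation argument needs a bound on $\|(\mathbf{I}-\mathbf{K})^{-1}\|$ uniform in $n$, and this has to be read off from the explicit, $n$-free form of the conjugated Wiener-Hopf operator $\tilde{\mathbf{K}}$ in that proof. Once that is in hand, the $o(n^{-\delta})$ decay of $\mathbf{K_{e}}$ beats the fixed constant $C$ and the result follows.
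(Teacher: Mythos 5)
Your proof is correct and follows essentially the same route as the paper, which simply writes $\mathbf{I}-\mathbf{M}=\mathbf{I}-\mathbf{K}-\mathbf{K_{e}}$ and invokes Lemmas 3 and 4 together with the openness of the set of invertible operators. Your version merely makes explicit the Neumann series and the $n$-independent bound on $\|(\mathbf{I}-\mathbf{K})^{-1}\|$ (read off from the $n$-free Wiener--Hopf operator $\tilde{\mathbf{K}}$), a uniformity point the paper leaves implicit but which is indeed needed for the perturbation argument to work for all large $n$.
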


\begin{proof}
$\mathbf{I}-\mathbf{M}=\mathbf{I}-\mathbf{K}-\mathbf{K_{e}}$. Apply Lemmas 3 and 4 and the fact that the set of invertible operators is open.
\end{proof}

The conclusion to be drawn from Proposition 1 is that for $n$ sufficiently large, the operator $I-VU$ is invertible on the space of sequences $l^{2,-\beta,n}(\mathbb{Z}^{+})$ obtained from $\mathbf{L}^{2,-\beta,n}(0,\infty)$ by considering the subspace of functions constant on open intervals between successive integers.

\section{Asymptotics of a Section of $T_{n}[\sigma]^{-1}$}

We state first an important step towards the desired result of this section.
\newline
\begin{proposition}
$\det X = (-1)^{p} \mathbf{G}[\tau]^{-p}n^{-p^{2}+2\beta p} c \left( 1+o(1) \right)$, where $c$ is a constant.
\newline
\end{proposition}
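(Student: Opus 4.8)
The plan is to compute $\det X$ by solving the finite Toeplitz systems $T_{n}[\sigma]p_{j}=z^{j}$, $j=0,\dots,p-1$, through the operator equation \eqref{twelfth}, and then evaluating the resulting $p\times p$ determinant. By Proposition 1 the operator $I-VU$ is invertible on $l^{2,-\beta,n}(\mathbb{Z}^{+})$ for large $n$, so \eqref{twelfth} determines $\phi_{j}/\sigma_{-}$ uniquely from $r_{j}:=-P_{-}(z^{n}vP^{-}(z^{j-n}\sigma_{+}^{-1}))$; the companion equation of \eqref{eleventh} then gives $\psi_{j}$, and $p_{j}=\sigma^{-1}(z^{j}+\phi_{j}+z^{n}\psi_{j})$. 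Since each $x_{i,j}$ equals a Fourier coefficient of $p_{j}$ of index between $n-p+1$ and $n$ (it is an entry of $T_{n}[\sigma]^{-1}$ in its bottom $p$ rows, cf.\ \eqref{ninth}), it is a bounded linear functional of $(\phi_{j},\psi_{j})$; thus everything reduces to the large-$n$ behavior of $(I-VU)^{-1}r_{j}$, whose input coefficients are themselves expanded via Lemma 1 and Erd\'{e}lyi's method.

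The first and principal step is to obtain a complete asymptotic expansion of each $x_{i,j}$ in descending powers of $n$. I would replace $I-VU$ by $\mathbf{I}-\mathbf{M}$ and factor $\mathbf{I}-\mathbf{M}=(\mathbf{I}-\mathbf{K})\bigl(\mathbf{I}-(\mathbf{I}-\mathbf{K})^{-1}\mathbf{K_{e}}\bigr)$. By Lemma 3 and its conjugation, $(\mathbf{I}-\mathbf{K})^{-1}$ is explicit: $\mathbf{I}-\mathbf{\tilde{K}}$ is the Wiener--Hopf operator with symbol $1-\hat{k}(\xi)$, which is nonvanishing of winding number zero, hence factors as $(1-\hat k)_{-}(1-\hat k)_{+}$ and inverts in closed form, after which conjugation by $\mathbf{A}$ transports the inverse back to $\mathbf{L}^{2,-\beta,n}$. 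By Lemma 4, $(\mathbf{I}-\mathbf{K})^{-1}\mathbf{K_{e}}$ has norm $o(n^{-\delta})$, so its Neumann series converges and finitely many terms suffice for any prescribed order. Inserting the expansions of the $r_{j}$ and returning from the continuum to sequences via the ``constant on unit intervals'' identification and the Euler--Maclaurin formula (as in Lemma 2) should yield, for each fixed $(i,j)$,
\[
x_{i,j}=n^{-1+2\beta}\bigl(\gamma_{i,j}^{(0)}+\gamma_{i,j}^{(1)}n^{-1}+\gamma_{i,j}^{(2)}n^{-2}+\cdots\bigr),
\]
a genuine asymptotic series whose coefficients are computable from $c_{0},c_{0}',\tau_{\pm}(1)$, the Fourier data of $\tau_{\pm}^{\pm1}$, and $\Gamma$-factors; in particular the leading matrix $(\gamma_{i,j}^{(0)})$ will turn out to have product, hence rank-one, form in the two indices.

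The second step is to evaluate the determinant. Since $(\gamma_{i,j}^{(0)})$ is rank one, $\det X$ vanishes to the naive order $n^{p(-1+2\beta)}$, and its true size is controlled by the subleading structure: factoring out the rank-one envelope leaves a residual matrix of the form $\mathbf{I}+n^{-2}(\cdot)+n^{-4}(\cdot)+\cdots$, so the rank is raised only at the orders $n^{0},n^{-2},n^{-4},\dots$, and a Cauchy--Binet (multilinearity) expansion of $\det X$ then leaves exactly one surviving term, of order $n^{p(-1+2\beta)-p(p-1)}=n^{-p^{2}+2\beta p}$. Collecting the constant in that term gives $\det X=(-1)^{p}\mathbf{G}[\tau]^{-p}n^{-p^{2}+2\beta p}c\,(1+o(1))$; the factor $\mathbf{G}[\tau]^{-p}=(\tau_{+}(0)\tau_{-}(\infty))^{-p}$ appears through the $p$-fold product over rows (or columns) of the geometric-mean content of the Wiener--Hopf factors, and the remaining $\beta$- and $\tau$-dependent constant is absorbed into $c$, whose precise value is not needed here --- which is exactly what later permits Vitali's theorem to pin down the Barnes $G$-contribution.

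The hard part will be the first step: producing the entrywise expansions down to order $n^{-2(p-1)}$ relative to the leading term, uniformly in $n$ and with the coefficients under control. This requires carrying the two distinct Wiener--Hopf factorizations --- of $\sigma$ and of $1-\hat k$ --- simultaneously through the Neumann expansion, and managing the passage between the matrix $VU$ of \eqref{thirteenth} and the kernel $K$ without discarding the subleading terms that, after the rank-one cancellation, actually determine $\det X$. Once those expansions are in hand, the determinant step, though combinatorially fussy, is essentially bookkeeping, as anticipated above.
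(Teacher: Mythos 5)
Your overall route is the paper's: reduce the entries of $X$ to the solution of \eqref{twelfth}, approximate $I-VU$ by $\mathbf{K}$ plus a small error, invert via the Wiener--Hopf structure and a Neumann series, extract asymptotic expansions of the matrix entries by Euler--Maclaurin, and then win the determinant computation by exploiting massive cancellation coming from a rank-deficient leading term. One structural difference: the paper does not expand $x_{i,j}$ directly, but first proves the factorization $X=-T_{p-1}[1/\sigma_{-}]\,Y\,T_{p-1}[1/\sigma_{-}]$ with $y_{i,j}=\left(z^{-n}u(I-VU)^{-1}z^{j}\right)\mathbf{\hat{}}\,(-i)$ (Lemma 5). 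The two triangular Toeplitz factors are independent of $n$ and their determinants supply exactly the factor $(-1)^{p}\mathbf{G}[\tau]^{-p}$; your ``$p$-fold product of the geometric-mean content of the Wiener--Hopf factors'' is the right intuition, but this factorization is what makes that factor a one-line corollary instead of something to be disentangled from the entrywise expansions, and it leaves the clean matrix $Y$ to be analyzed.

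The genuine gap is in your determinant step. You posit that after factoring out a rank-one envelope the residual has the form $\mathbf{I}+n^{-2}(\cdot)+n^{-4}(\cdot)+\cdots$, so that the rank is raised only at even relative orders. That structural claim is unjustified and is not what actually happens: the expansion has corrections at every integer power, $y_{i,j}\sim\sum_{k}p_{k}(i,j)n^{-1+2\beta-k}$, and the odd-order terms do contribute. The missing ingredient is the degree bound of Lemma 6: $p_{k}(i,j)$ is a polynomial of degree $k$ in $(i,j)$, so the order-$k$ correction is a sum of rank-one matrices with $(i,j)$ entry $i^{k_{1}}j^{k_{2}}$, $k_{1}+k_{2}\le k$. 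In the multilinear expansion of $\det Y$, any selection in which two rows carry the same $j$-exponent gives a singular matrix, and any selection in which two rows carry the same $i$-exponent cancels in pairs against its transposed partner; the surviving terms therefore require the $i$-exponents and the $j$-exponents to be \emph{simultaneously} distinct, which forces $k_{0}+\cdots+k_{p-1}\ge 2(0+1+\cdots+(p-1))=p^{2}-p$. It is this doubling --- distinctness in both index families at once --- that produces the exponent $-p^{2}+2\beta p$, not a parity restriction on which orders raise the rank. Your final exponent agrees numerically, but without the degree bound on $p_{k}$ and the double-distinctness cancellation, the claim that ``exactly one surviving term'' has that order does not follow.
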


The proof of this identity is divided into three parts. The first part is a factorization, essentially due to Widom, for which the evaluation of the determinants of the individual terms is facilitated.
\newline
\begin{lemma}
$X=-T_{p-1}[1/\sigma_{-}]YT_{p-1}[1/\sigma_{-}]$, where $Y$ is the $p\times p$ matrix with $(i,j)$ entry
\begin{equation}
y_{i,j}= \left( z^{-n}u(I-VU)^{-1}z^{j} \right)\mathbf{\hat{}}\,(-i). \label{fifteenth}
\newline
\end{equation}
\end{lemma}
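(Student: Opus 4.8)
The plan is to compute the entries of $X$ directly from the Wiener--Hopf solution of \eqref{eigth}. By \eqref{ninth} the $j$th column of $X$ records the top $p$ Fourier coefficients $\widehat{p_{j}}(n-p+1+i)$, $0\le i\le p-1$, of the polynomial $p_{j}$ solving $T_{n}[\sigma]p_{j}=z^{j}$; equivalently these are the coefficients of $z^{-n}p_{j}$ at the indices $0,-1,\ldots,-(p-1)$. I would recover them from the pair \eqref{eleventh} using the second equation there --- the one carrying $\sigma_{-}$ --- because $\sigma_{-}^{-1}$ has only non-positive Fourier coefficients and therefore interacts cleanly with ``top'' coefficients. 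Since the left side of that equation has all its Fourier coefficients at indices $\le 0$ and $\psi_{j}/\sigma_{+}$ has all of its at indices $\ge 1$, applying $P^{-}$ annihilates the $\psi_{j}/\sigma_{+}$ term and leaves
\[
z^{-n}\sigma_{-}p_{j}=P^{-}\!\left(z^{-n}\,\frac{z^{j}+\phi_{j}}{\sigma_{+}}\right).
\]
Writing $(z^{j}+\phi_{j})/\sigma_{+}=u\,\xi_{j}$ with $\xi_{j}:=(z^{j}+\phi_{j})/\sigma_{-}=z^{j}/\sigma_{-}+\phi_{j}/\sigma_{-}$, and dividing by $\sigma_{-}$, this becomes $z^{-n}p_{j}=\sigma_{-}^{-1}z^{-n}\mathbf{P}_{n}P^{+}(u\xi_{j})$ up to a summand all of whose Fourier coefficients sit at indices $\le -n$, hence invisible among the top $p$ coefficients of $p_{j}$.

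Next I would peel off the two Toeplitz factors. Because $\sigma_{-}^{-1}$ has only non-positive Fourier coefficients while $\mathbf{P}_{n}(\cdot)$ has degree at most $n$, the linear map sending the top $p$ coefficients of $P^{+}(u\xi_{j})$ to the needed $p$ coefficients of $z^{-n}p_{j}$ is exactly multiplication by the upper-triangular Toeplitz matrix $T_{p-1}[1/\sigma_{-}]$ --- this is the left factor. (Incidentally $\det T_{p-1}[1/\sigma_{-}]=\widehat{1/\sigma_{-}}(0)^{p}=\sigma_{-}(\infty)^{-p}=\mathbf{G}[\tau]^{-p/2}$, which is the source of the $\mathbf{G}[\tau]^{-p}$ in the preceding Proposition.) For the right factor, recall that $\phi_{j}/\sigma_{-}$ solves \eqref{twelfth}; its right-hand side, together with the auxiliary summand $z^{j}/\sigma_{-}$ inside $\xi_{j}$, depends on the column index $j$ only through a shift of the fixed sequence $\widehat{1/\sigma_{-}}$, and once one restricts to the coefficients that actually enter, namely $\big(z^{-n}u\xi_{j}\big)\mathbf{\hat{}}\,(-i)=\widehat{u\xi_{j}}(n-i)$, this $j$-dependence collapses to right multiplication by a second copy of $T_{p-1}[1/\sigma_{-}]$. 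What remains sandwiched between the two factors is the operator $z^{-n}u(I-VU)^{-1}$ applied to the column sources --- precisely the matrix $Y$ of \eqref{fifteenth} --- and the overall minus sign is the one standing in front of the right-hand side of \eqref{twelfth}.

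The conceptual content is just Widom's Wiener--Hopf factorization of the finite-section inverse $T_{n}[\sigma]^{-1}$, localized to the bottom-left $p\times p$ corner; what makes the corner tractable is that the whole of it --- both the ``diagonal'' Wiener--Hopf contribution and the finite-size correction --- is carried by the single function $\xi_{j}$, and hence by $Y$. The genuine obstacle is therefore not the idea but the bookkeeping: one must verify that the truncation $\mathbf{P}_{n}$ present in the source of \eqref{twelfth}, the splitting of $\xi_{j}$ into its $z^{j}/\sigma_{-}$ and $\phi_{j}/\sigma_{-}$ parts, and the reindexing forced by reading coefficients ``from the top'' all conspire so that the identity holds \emph{exactly} with $Y$ as written --- in particular, that the ``$z^{j}$'' appearing inside $(I-VU)^{-1}$ in \eqref{fifteenth} is the correctly normalized $j$th column source. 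I expect that is where essentially all the work lies; once it is in place, the factorization $X=-T_{p-1}[1/\sigma_{-}]\,Y\,T_{p-1}[1/\sigma_{-}]$ drops out at once.
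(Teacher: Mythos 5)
Your argument is correct and follows essentially the same route as the paper's own proof: the left Toeplitz factor comes from convolving $\sigma_{-}p_{j}$ with $\widehat{\sigma_{-}^{-1}}$, the middle operator $-z^{-n}u(I-VU)^{-1}$ comes from the second equation of \eqref{eleventh} (with $\psi_{j}/\sigma_{+}$ killed by projection) combined with the solution of \eqref{twelfth}, and the right factor comes from expanding the source $P^{+}(z^{j}/\sigma_{-})$ in the coefficients $\widehat{1/\sigma_{-}}(k-j)$. Your parenthetical evaluation $\det T_{p-1}[1/\sigma_{-}]=\mathbf{G}[\tau]^{-p/2}$ is also consistent with the Corollary the paper draws from this lemma.
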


\begin{proof}
Recall $X$ has $(i,j)$ entry
\[
\begin{aligned}
x_{i,j} &= \widehat{p_{j}}(n-p+i+1) \\
&= \sum_{k=1}^{p-1} \widehat{\sigma_{-}^{-1}} (i-k) \widehat{\sigma_{-}p_{j}}(n-p+k+1)
\end{aligned}
\]
since $\sigma_{-}^{-1} \in \overline{\mathbf{H}^{2}}$. From \eqref{eleventh} we obtain
\[
z^{-n}\sigma_{-}p_{i}=z^{i-n}\sigma_{+}^{-1}+z^{-n}u\frac{\phi}{\sigma_{-}}+\frac{\psi}{\sigma_{+}}.
\]
Now
\[
\frac{\phi}{\sigma_{-}}=-(I-VU)^{-1}\left( P^{+} \left( \frac{z^{i}}{\sigma_{-}} \right) \right)-\frac{z^{i}}{\sigma_{-}}
\]
so that
\[
z^{-n}\sigma_{-}p_{i}=-z^{-n}u(I-VU)^{-1}\left( P^{+} \left( \frac{z^{i}}{\sigma_{-}} \right) \right),
\]
as $\frac{\psi}{\sigma_{+}} \in z\mathbf{H}^{1}$. Putting the above identities together yields the desired matrix identity. An auxiliary fact is the
\newline
\newline
\textbf{Corollary.} $\det X = (-1)^{p} \mathbf{G}[\tau]^{-p} \det Y$.
\end{proof}

Using the identity $(I-VU)^{-1} = I + (I-VU)^{-1}VU$ we write
\begin{equation}
y_{i,j}=\hat{u}(n-i-j)+\sum_{k=0}^{\infty} \hat{u}(n-i+k) \left[ (I-VU)^{-1}VUz^{j} \right] \mathbf{\hat{}} \, (-k). \label{sixteenth}
\newline
\end{equation}
The second part of the proof of Proposition 2 establishes the following asymptotic expansion.
\newline
\begin{lemma}
$y_{i,j} \sim \sum_{k=0}^{\infty} p_{k}(i,j)n^{-1+2\beta-k}$, where $p_{k}$ is a polynomial of degree $k$.
\newline
\end{lemma}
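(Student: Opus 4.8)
\emph{Proof idea.} I would treat the two terms of \eqref{sixteenth} separately: the diagonal term $\hat u(n-i-j)$ and the correction term $S_{i,j}:=\sum_{k\ge0}\hat{u}(n-i+k)\bigl[(I-VU)^{-1}VUz^{j}\bigr]\mathbf{\hat{}}\,(-k)$. Since $i,j$ range over the fixed finite set $\{0,\dots,p-1\}$, there is no issue of uniformity in $i,j$ and every ``$o$'' below is automatically uniform. The diagonal term is immediate from Lemma 1: $\hat u(n-i-j)\sim\sum_{m\ge0}c_{m}(n-i-j)^{-1+2\beta-m}$, and expanding $(n-i-j)^{-1+2\beta-m}=\sum_{l\ge0}\binom{-1+2\beta-m}{l}(-(i+j))^{l}n^{-1+2\beta-m-l}$ and collecting powers of $n$ gives $\sum_{k\ge0}q_{k}(i,j)n^{-1+2\beta-k}$ with $q_{k}(i,j)=\sum_{m+l=k}c_{m}\binom{-1+2\beta-m}{l}(-(i+j))^{l}$, a polynomial in $i+j$ of degree $k$: its top coefficient is $c_{0}\binom{-1+2\beta}{k}(-1)^{k}\ne0$, since $c_{0}\ne0$ (because $\beta\notin\mathbb Z$, from the explicit value of $c_{0}$) and $|\Re\beta|<\tfrac12$ keeps $-1+2\beta-m$ out of $\{0,1,2,\dots\}$ for every $m\ge0$.

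The work is in $S_{i,j}$, and the key difficulty is that $(I-VU)^{-1}$ cannot be expanded as a Neumann series in $VU$: the relevant operator's spectrum is essentially governed by the symbol $\hat k$ of Lemma 3 and reaches magnitude $\tan^{2}\pi\beta>1$ once $|\Re\beta|>\tfrac14$. So I would expand about the model operator $\mathbf{K}$. Identifying $l^{2,-\beta,n}(\mathbb Z^{+})$ with step functions in $\mathbf{L}^{2,-\beta,n}$ as after Proposition 1, so that $I-VU$ becomes $\mathbf{I}-\mathbf{M}=\mathbf{I}-\mathbf{K}-\mathbf{K_{e}}$, Lemma 3 bounds $(\mathbf{I}-\mathbf{K})^{-1}$ uniformly in $n$ and Lemma 4 gives $\|\mathbf{K_{e}}\|=o(n^{-\delta})$ for every $\delta>0$, whence
\[
(\mathbf{I}-\mathbf{K}-\mathbf{K_{e}})^{-1}=\sum_{m=0}^{N}\bigl[(\mathbf{I}-\mathbf{K})^{-1}\mathbf{K_{e}}\bigr]^{m}(\mathbf{I}-\mathbf{K})^{-1}+R_{N},
\]
with $\|R_{N}\|$ smaller than any prescribed negative power of $n$ once $N$ is large. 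Thus, to any prescribed order, $S_{i,j}$ is a sum of finitely many scalars of the form ``$(\hat u(n-i+k))_{k}$ paired with a product of copies of $(\mathbf{I}-\mathbf{K})^{-1}$ and $\mathbf{K_{e}}$ applied to $VUz^{j}$''.

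Each ingredient of such a scalar carries an asymptotic expansion in $n^{-1}$ whose coefficients are smooth, suitably decaying functions of the rescaled index $k/n$, with polynomial dependence on the bounded shifts $i,j$. For the kernel of $VU$ one inserts the Lemma-1 expansions of $\hat u,\hat v$ into \eqref{thirteenth} and collapses the resulting series by Euler-Maclaurin along the lines of the proof of Lemma 2, but carried to all orders; the same device, with the shift $j$ expanded binomially, gives the expansions of $[VUz^{j}]\,\mathbf{\hat{}}\,(-k)$ and of $\hat u(n-i+k)$. For $(\mathbf{I}-\mathbf{K})^{-1}$ I would conjugate by the isomorphism $\mathbf{A}$ of Lemma 3, $(\mathbf{I}-\mathbf{K})^{-1}=\mathbf{A}^{-1}(\mathbf{I}-\mathbf{\tilde{K}})^{-1}\mathbf{A}$, where $\mathbf{\tilde{K}}$ is the $n$-independent Wiener-Hopf operator with symbol $\hat k$ from Lemma 3; $(\mathbf{I}-\mathbf{\tilde{K}})^{-1}$ is read off from a Wiener-Hopf factorization of $1-\hat k(\xi)=\cosh^{2}\pi\xi/(\cosh^{2}\pi\xi-\sin^{2}\pi\beta)$, a rational function of $e^{2\pi\xi}$ that is zero-free of winding number $0$ (as noted in the proof of Lemma 3), and one checks that $\mathbf{A}$ and $\mathbf{A}^{-1}$---whose $n$-dependence enters only through the substitution $x=\log(1+k/n)$---carry a sequence with such an expansion to another one. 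Multiplying the expansions of the factors, summing the internal index by Euler-Maclaurin (the leading integrals give Beta-function-type constants, the boundary terms give polynomials in $i,j$), and collecting powers of $n$ yields $S_{i,j}\sim\sum_{k\ge0}r_{k}(i,j)n^{-1+2\beta-k}$ with $\deg r_{k}\le k$. Adding the diagonal contribution gives the expansion with $p_{k}=q_{k}+r_{k}$: $p_{0}$ is a constant, and the degree-$k$ part of $r_{k}$---monomials $i^{l}j^{k-l}$ arising from binomial expansions of factors $(n+\text{linear in }i,j,k)^{-1+2\beta-m}$ with nonvanishing binomial coefficients---is verified, by tracking the corresponding leading constants, not to cancel the degree-$k$ term of $q_{k}$, so $\deg p_{k}=k$.

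The main obstacle is exactly this passage: propagating the ``asymptotic expansion with polynomial coefficients'' structure through the resolvent $(I-VU)^{-1}$. One is forced onto the model operator $\mathbf{K}$ and its explicit Wiener-Hopf form (the naive Neumann series in $VU$ being unavailable), must check that the $n$-dependent conjugations by $\mathbf{A}^{\pm1}$ preserve the relevant function class, and must keep the polynomial degrees and the orders of all error terms under control through the several Euler-Maclaurin summations and through the Neumann truncation. The remaining pieces---the diagonal term, the binomial and Euler-Maclaurin bookkeeping, and the nonvanishing of the top-degree coefficients---are routine once this framework is set up.
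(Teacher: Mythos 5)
Your overall route is the same as the paper's: split \eqref{sixteenth} into the diagonal term $\hat u(n-i-j)$ (handled by Lemma 1 plus the binomial theorem) and the resolvent term, pass to the integral operators on $\mathbf{L}^{2,-\beta,n}$, expand $(\mathbf{I}-\mathbf{K}-\mathbf{K_{e}})^{-1}$ as a Neumann series around $(\mathbf{I}-\mathbf{K})^{-1}$, and convert every internal sum to an integral plus corrections by Euler--Maclaurin while tracking polynomial degrees in $i,j$.

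There is, however, one genuine gap in how you justify truncating the Neumann series. You take $\mathbf{K}$ and $\mathbf{K_{e}}$ exactly as in Lemmas 3 and 4 and assert $\|\mathbf{K_{e}}\|=o(n^{-\delta})$ for \emph{every} $\delta>0$, so that the remainder $R_{N}$ is $o(n^{-A})$ for any $A$ once $N$ is large. That cannot be right: the kernel $M(x,y)-K(x,y)$ genuinely contains the contributions of the subleading coefficients $c_{1},c_{1}',\dots$ of Lemma 1, which are only one power of $n$ smaller than the leading term, so the $\delta$ of Lemmas 2 and 4 is bounded; indeed, if your reading were correct one would have $(\mathbf{I}-\mathbf{M})^{-1}=(\mathbf{I}-\mathbf{K})^{-1}$ up to errors negligible to all orders and no Neumann series would be needed at all. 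The paper flags exactly this point --- ``the particular operator used previously is not suitable for obtaining a complete asymptotic expansion'' --- and repairs it by replacing $\hat u,\hat v$ with their truncated asymptotic series $\zeta_{1},\zeta_{2}$ carried to order $M$, introducing the operator $\mathbf{W}$ with $\mathbf{M}=\mathbf{W}+o(n^{-M})$, and redefining $\mathbf{K}$ (the integral version of $\mathbf{W}$) and $\mathbf{K_{e}}=\mathbf{W}-\mathbf{K}$, which is then pure Euler--Maclaurin discretization error of norm $o(n^{-1})$; each Neumann power contributes at a successive order in $n^{-1}$ and finitely many terms suffice to the prescribed accuracy. Your sketch contains the ingredients for this repair (you do say the kernel expansion should be ``carried to all orders''), but as written the step that discards $R_{N}$ is unsound, and each term $\left[(\mathbf{I}-\mathbf{K})^{-1}\mathbf{K_{e}}\right]^{m}$ built from the \emph{original} $\mathbf{K_{e}}$ does not itself admit the clean expansion you need. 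The remaining pieces --- the Wiener--Hopf description of $(\mathbf{I}-\mathbf{K})^{-1}$ via conjugation by $\mathbf{A}$, the binomial and Euler--Maclaurin bookkeeping, and the degree count --- match the paper's argument.
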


\begin{proof}
We use the Euler-Maclaurin summation formula to obtain terms in the asymptotic expansion of $y_{i,j}$. As in the previous section we utilize an approximation of $VU$ by an operator with smooth kernel. As it happens, the particular operator used previously is not suitable for obtaining a complete asymptotic expansion. We alter the given operators as follows. Define complex-valued functions $\zeta_{1}(x)$ for $0 \le x <\infty$ and $\zeta_{2}(x)$ for $-\infty< x \le 0$ by the formulas
\[
\begin{aligned}
\zeta_{1}(x) &= \sum_{m=0}^{M} c_{m} x^{-1+2\beta-m} \\
\zeta_{2}(-x) &= \sum_{m=0}^{M} c_{m}' x^{-1-2\beta-m}
\end{aligned}
\]
for $x \ge 0$, where the constants $c_{m}$ and $c_{m}'$ are defined previously by Lemma 1 and $M$ is as large as we like (for any fixed value of $\beta$, we require only finitely many terms in any of these expansions, the number growing larger as the modulus of $\beta$ increases). From these definitions and Lemma 1 we immediately conclude that
\[
\hat{u}(n)-\zeta_{1}(n) = o(n^{-1+2\beta-M}) \qquad \hat{v}(-n)-\zeta_{2}(-n) = o(n^{-1-2\beta-M})
\]
and that
\[
(VU)_{i,j} = \sum_{k=1}^{\infty} \zeta_{1}(n-j+k)\zeta_{2}(-n+i-k) + o(n^{-1-M}).
\]
Let
\[
W(x,y)=\sum_{k=1}^{\infty} \zeta_{1}(n+\{ y \} +k) \zeta_{2}(n-\{ x \} -k)
\]
and let $\mathbf{W}$ denote the integral operator on $\mathbf{L}^{2,-\beta,n}(0,\infty)$ with kernel $W(x,y)$. Letting $o$ notation here be in the context of operator norm, it follows that $\mathbf{M} = \mathbf{W} + o(n^{-M})$ and hence by Proposition 1 that $\mathbf{I}-\mathbf{W}$ is invertible on $\mathbf{L}^{2,-\beta,n}(0,\infty)$ and that $(\mathbf{I}-\mathbf{M})^{-1} = (\mathbf{I}-\mathbf{W})^{-1}+o(n^{-M})$. Replacing our old definitions of $\mathbf{K}$ and $\mathbf{K_{e}}$ we write
\[
\begin{aligned}
K(x,y) &= \int_{0}^{\infty} \zeta_{1}(n+y+z)\zeta_{2}(-n-x-z)dz \\
K_{e}(x,y) &= W(x,y)-K(x,y) \\
g_{k} &= K(x,-k) \\
g_{k,e} &= K_{e}(x,-k)
\end{aligned}
\]
and let $\mathbf{K}$ and $\mathbf{K_{e}}$ denote the integral operators on $\mathbf{L}^{2,-\beta,n}(0,\infty)$ with kernels $K(x,y)$ and $K_{e}(x,y)$, respectively. We obtain
\[
(\mathbf{I}-\mathbf{M})^{-1}\mathbf{M}z^{k}=(\mathbf{I}-\mathbf{K}-\mathbf{K_{e}})^{-1}(g_{k}+g_{k,e})+o(n^{-M}),
\]
where again, $o(n^{-M})$ refers to a function with this norm on $\mathbf{L}^{2,-\beta,n}(0,\infty)$. $\mathbf{K}$ is just a perturbation of our previous operator of this name; it is easy to show that $\mathbf{I}-\mathbf{K}$ is invertible for $n$ sufficiently large and that the norm of the new $\mathbf{K}$ is the same as the old, up to a term of norm $o(1)$. The operator $\mathbf{K_{e}}$, too, behaves like its previous version; in particular we have $\| \mathbf{K_{e}} \| = o(n^{-1})$ as $n \to \infty$. We therefore obtain a Neumann expansion for the inverse:
\[
(\mathbf{I}-\mathbf{K}-\mathbf{K_{e}})^{-1}=(\mathbf{I}-\mathbf{K})^{-1}\sum_{i=0}^{\infty}\left[ \mathbf{K_{e}}(\mathbf{I}-\mathbf{K})^{-1} \right]^{i}.
\]
Applying Euler-Maclaurin summation to each term in this series, we obtain an expansion
\[
(\mathbf{I}-\mathbf{K}-\mathbf{K_{e}})^{-1}(g_{k}+g_{k,e})(x) \sim \sum_{i=0}^{\infty}(n-k)^{-1-i}h_{i}\left( \frac{j+k}{n-j} \right)
\]
where the functions $h_{i}$ do not depend on $n$ or $k$. Using Euler-Maclaurin summation on the expansion
\[
\sum_{k=0}^{\infty} \hat{u}(n-i+k)\sum_{l=0}^{\infty}(n-j)^{-1-l}h_{l}\left( \frac{j+k}{n-j} \right) ,
\]
Lemma 1 and the binomial theorem applied to $\hat{u}(n-i-j)$, counting carefully the resulting powers of the $i$ and $j$ terms, yield the desired result.
\end{proof}
The third part of the proof of Proposition 1 now uses the above information to compute the desired determinant.
\newline
\begin{lemma}
$\det Y = c n^{-p^{2}+2\beta p} \left(1+o(1) \right)$, where $c$ is a constant.
\end{lemma}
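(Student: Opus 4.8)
The plan is to compute $\det Y$ directly from the asymptotic expansion $y_{i,j}\sim\sum_{k\ge 0}p_k(i,j)\,n^{-1+2\beta-k}$ furnished by Lemma~8, using the fact that each $p_k$ is a polynomial of degree $k$. First I would factor out the common scalar: write $y_{i,j}=n^{-1+2\beta}\bigl(p_0(i,j)+n^{-1}p_1(i,j)+n^{-2}p_2(i,j)+\cdots\bigr)$, so that $\det Y=n^{(-1+2\beta)p}\det\bigl(p_0(i,j)+n^{-1}p_1(i,j)+\cdots\bigr)_{1\le i,j\le p}$. This already supplies the factor $n^{-p+2\beta p}$; the remaining task is to show the determinant of the matrix $\bigl(p_0(i,j)+n^{-1}p_1(i,j)+\cdots\bigr)$ is $c\,n^{-p^2+p}(1+o(1))$ for a nonzero constant $c$, since $(-p+2\beta p)+(-p^2+p)=-p^2+2\beta p$ as required.

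The key observation is the degree bookkeeping. Since $p_0$ has degree $0$, it is a constant, say $p_0(i,j)\equiv a$; the leading matrix $(a)_{i,j}$ is rank one, so its determinant vanishes and we must go deeper. The mechanism to extract the true leading term is the multilinearity of the determinant in the rows (or columns) combined with the polynomial structure: expanding $\det\bigl(\sum_k n^{-k}p_k(i,j)\bigr)$ and collecting powers of $n^{-1}$, the coefficient of each power is a sum of products of the $p_k$'s, one factor chosen from each row. Because row $i$ contributes a polynomial in $j$ of degree equal to the index $k_i$ chosen there, and because a determinant of a matrix whose $i$th row is a polynomial in $j$ of degree $< i-1$ (after suitable reindexing) must vanish by a Vandermonde-type argument, the lowest-order surviving term forces the multiset of chosen indices $\{k_1,\dots,k_p\}$ to be exactly $\{0,1,\dots,p-1\}$. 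That selection contributes the power $n^{-(0+1+\cdots+(p-1))}=n^{-p(p-1)/2}$ from one column-choice and, by the symmetry of the argument applied to columns as well (the $p_k(i,j)$ are genuinely polynomials of degree $k$ in the pair, and one checks the degree in $i$ is likewise $k$ for the relevant terms), another factor $n^{-p(p-1)/2}$, for a total of $n^{-p(p-1)}=n^{-p^2+p}$. The surviving coefficient $c$ is then a determinant built from the top-degree parts of $p_0,\dots,p_{p-1}$, essentially a scalar multiple of a Vandermonde-type determinant, hence computable and — this is the crucial point to verify — nonzero.

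The main obstacle is precisely establishing that $c\ne 0$. The vanishing of all lower-order terms is a soft consequence of the polynomial degrees and multilinearity, but nonvanishing of the leading coefficient requires knowing the top-degree coefficients of the $p_k$ explicitly enough to recognize the resulting determinant as (a nonzero multiple of) a Vandermonde determinant or some other manifestly nonzero quantity. For this I would trace back through the proof of Lemma~8: the binomial theorem applied to $\hat u(n-i-j)$ via Lemma~1 gives the dominant contribution $\hat u(n-i-j)\sim c_0(n-i-j)^{-1+2\beta}\sim c_0 n^{-1+2\beta}\sum_{k}\binom{-1+2\beta}{k}(-(i+j)/n)^k$, whose degree-$k$ part in $(i,j)$ is $c_0\binom{-1+2\beta}{k}(-1)^k(i+j)^k/n^k$ — and $(i+j)^k$ is, as a polynomial, exactly what produces a Vandermonde structure. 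One must check that the corrections coming from the $h_l$ terms in Lemma~8 do not alter the top-degree coefficients (they contribute lower-degree polynomials, or the same degree with a harmless multiplicative adjustment), and that the binomial coefficients $\binom{-1+2\beta}{k}$ for $k=0,\dots,p-1$ are all nonzero — which holds precisely because $\beta$ is not a half-integer making $-1+2\beta$ a nonnegative integer less than $p-1$; under the standing hypothesis that $\beta$ is not an integer (and here, in the regime $|\Re\beta|<\tfrac12$), this is automatic. Assembling these pieces, $c$ emerges as $c_0^{\,p}$ times a nonzero numerical determinant, completing the proof.
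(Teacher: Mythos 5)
Your core argument is the one the paper uses: expand $\det Y$ multilinearly from $y_{i,j}=\sum_{k}p_{k}(i,j)n^{-1+2\beta-k}$, reduce each $p_{k}$ to monomials $i^{k_{1}}j^{k_{2}}$ with $k_{1}+k_{2}\le k$, and argue that a surviving contribution forces the row-exponents $k_{1,i}$ to be pairwise distinct \emph{and} the column-exponents $k_{2,i}$ to be pairwise distinct, whence $\sum k_{i}\ge\sum k_{1,i}+\sum k_{2,i}\ge 2(0+1+\cdots+(p-1))=p^{2}-p$ and the leading term is $cn^{-p+2\beta p-(p^{2}-p)}=cn^{-p^{2}+2\beta p}$. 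Two points of your bookkeeping need repair, and the paper's monomial-level version supplies the repair. First, your intermediate claim that the surviving indices $\{k_{0},\dots,k_{p-1}\}$ equal $\{0,\dots,p-1\}$ is inconsistent with your final answer: that choice gives $\sum k_{i}=p(p-1)/2$ and hence the power $n^{-p+2\beta p-p(p-1)/2}$ only; the extra factor $n^{-p(p-1)/2}$ cannot be harvested ``again from the columns,'' because the power of $n$ is determined by $\sum k_{i}$ alone. The correct statement is that the two distinctness constraints act simultaneously on a single choice of exponent pairs $(k_{1,i},k_{2,i})$ with $k_{1,i}+k_{2,i}\le k_{i}$, which is what pushes $\sum k_{i}$ up to $p^{2}-p$. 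Second, when two $k_{1,i}$ coincide the relevant determinant does not literally vanish; rather, pairs of terms in the full expansion cancel because they differ by a sign --- a distinction the paper makes explicitly. (Also, the expansion you cite is Lemma 6 of the paper, not Lemma 8.)

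Where you genuinely diverge is the attempt to prove $c\ne 0$ by identifying the top-degree parts of the $p_{k}$, and this is where there is a real gap. You take the top-degree part of $p_{k}(i,j)$ to be $c_{0}\binom{-1+2\beta}{k}(-1)^{k}(i+j)^{k}$, coming solely from the binomial expansion of $\hat{u}(n-i-j)$, and assert that the remaining terms ``do not alter the top-degree coefficients.'' But the second term of equation \eqref{sixteenth} --- the sum involving the $(-k)$th Fourier coefficients of $(I-VU)^{-1}VUz^{j}$ --- is of the \emph{same} order $n^{-1+2\beta}$ as $\hat{u}(n-i-j)$; already for $k=0$ it contributes a constant of leading order, so $p_{0}$ is not $c_{0}$, and nothing in Lemma 6 bounds the polynomial degree of its contributions to the higher $p_{k}$ below $k$. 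Consequently the coefficient $c$ is not the clean Vandermonde-type determinant you describe, and $c=c_{0}^{\,p}\times(\text{nonzero numerical determinant})$ is unsupported. Fortunately none of this is needed for the lemma as stated, which only asserts that $c$ is a constant: the paper determines $c$ (and in particular its nonvanishing for $\beta\notin\mathbb{Z}$) only later, in Theorem 3, by analytic continuation in $\beta$ via Vitali's convergence theorem from the known formula in the strip $|\Re\beta|<\frac{1}{2}$. If you delete the $c\ne 0$ excursion and tighten the exponent count as above, your proof matches the paper's.
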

\begin{proof}
Given the expansion
\[
y_{i,j}=\sum_{k=0}^{M}p_{k}(i,j)n^{-1+2\beta-k} + o(n^{-1+2\beta -M}),
\]
we compute the determinant of $Y$ directly. For the computation that follows we shall use for the sake of convenience the definition $0^{0}=1$. We have
\[
\det Y = \sum_{k_{0}=0}^{M} \cdots \sum_{k_{p-1}=0}^{M} \det \left[ \left(p_{k_{i}}(i,j)\right)_{0 \le i,j < p}\right] n^{-p+2\beta p - k_{0} - \cdots -k_{p-1}}.
\]
Writing the polynomials in the above expression as sums of monomials and expanding the determinant we obtain a sum of terms of the form
\[
c \det \left[ \left( i^{k_{1,j}}j^{k_{2,j}} \right)_{0 \le i,j <p} \right] n^{-p+2\beta p-k_{0}-\cdots -k_{p-1}}
\]
where $k_{1,i}+k_{2,i} \le k_{i}$. This last expression equals
\[
c \prod_{i=0}^{p-1} i^{k_{1,i}} \det \left[ \left( j^{k_{2,i}} \right)_{0 \le i,j <p} \right] n^{-p+2\beta p-k_{0}-\cdots-k_{p-1}}.
\]
In collecting these terms to obtain an expression for $\det Y$ one finds considerable algebraic cancellation. Note that if $k_{2,i_{1}}=k_{2,i_{2}}$ for some $0 \le i_{1} \ne i_{2} < p$, then the determinant of the matrix $\left[ \left( j^{k_{2,i}} \right)_{i,j} \right]$ is zero. Furthermore, if $k_{1,i_{1}}=k_{2,i_{2}}$ for $0 \le i_{1} \ne i_{2} < p$, then the collection of terms constituting $\det Y$ will contain two terms corresponding to the permutations of the set $\{i_{1},i_{2}\}$; these terms cancel each other as they differ by a factor of $(-1)$. From these observations we conclude that nonzero contributions to a first order asymptotic expansion of $\det Y$ arise from the case in which $k_{1,i_{0}}, \ldots , k_{1,i_{p-1}}$ are distinct and $k_{2,i_{0}}, \ldots , k_{2,i_{p-1}}$ are distinct. Having these two sets of distinct elements implies in turn that
\[
\begin{aligned}
k_{0}+\cdots+k_{p-1} &\ge k_{1,i_{0}}, \ldots , k_{1,i_{p-1}}+k_{2,i_{0}}, \ldots , k_{2,i_{p-1}} \\
&\ge 2 \sum_{i=0}^{p-1} i = p^{2} - p.
\end{aligned}
\]
The leading term in the asypototic expansion of $\det Y$ is therefore of the form
\[
cn^{-p+2\beta p - (p^{2}-p)} = cn^{-p^{2}+2\beta p},
\]
yielding $\det Y = c n^{-p^{2}+2\beta p}\left( 1+o(1) \right)$, as desired.
\end{proof}

\begin{proof}[Proof of Proposition 1]
\[
\begin{aligned}
\det X &= (-1)^{p}\mathbf{G}[\tau]^{-p}\det Y \\
&= (-1)^{p}\mathbf{G}[\tau]^{-p}cn^{-p^{2}+2\beta p} \left( 1+o(1) \right),
\end{aligned}
\]
as desired.
\end{proof}
Having this result we may now partially extend equation \eqref{sixth} for values of $\beta$ outside of the region of the complex plane $| \Re \beta | < \frac{1}{2}$, along the lines of the remarks following equation \eqref{seventh}.

\section{Asymptotics of $D_{n}[\sigma]$ and Eigenvalue Distributions}

\begin{proposition}
For $\sigma \in C_{\beta}$, $| \Re \beta | < \frac{1}{2}$, $D_{n}[(-z)^{p}\sigma]=\mathbf{G}[\tau]^{n+1}n^{-(p+\beta)^{2}} c \left( 1+o(1) \right)$ for any integer $p$.
\end{proposition}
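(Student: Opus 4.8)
The plan is to obtain the formula for negative integer shifts directly from the identity \eqref{seventh}, the expansion \eqref{sixth}, and the asymptotics of $\det X$ (Proposition 2), and then to reach the positive integer shifts by exploiting the transpose symmetry $D_{n}[\rho]=D_{n}[\rho^{-}]$, where $\rho^{-}(z)=\rho(z^{-1})$.

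First I would treat the negative shifts. Fix a positive integer $p$ and $\sigma=(-z)^{\beta}\tau\in C_{\beta}$ with $|\Re\beta|<\frac{1}{2}$, and read \eqref{seventh} as
\[
D_{n-p}[(-z)^{-p}\sigma]=(-1)^{p}\det X\cdot D_{n}[\sigma].
\]
Substituting the asymptotics $\det X=(-1)^{p}\mathbf{G}[\tau]^{-p}n^{-p^{2}+2\beta p}c(1+o(1))$ from Proposition 2 and the expansion \eqref{sixth} for $D_{n}[\sigma]$ into the right-hand side, the signs cancel, the factor $\mathbf{G}[\tau]^{-p}$ lowers the exponent of $\mathbf{G}[\tau]$, and one is left with
\[
D_{n-p}[(-z)^{-p}\sigma]=\mathbf{G}[\tau]^{\,n+1-p}\,n^{-p^{2}+2\beta p-\beta^{2}}\,c'\,(1+o(1))
\]
for a new constant $c'$ independent of $n$. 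Since $-p^{2}+2\beta p-\beta^{2}=-(p-\beta)^{2}$ and $(-z)^{-p}\sigma=(-z)^{\beta-p}\tau$, setting $m=n-p$ (so $n\to\infty$ iff $m\to\infty$) and using $(1+p/m)^{-(p-\beta)^{2}}=1+o(1)$ converts this to
\[
D_{m}[(-z)^{-p}\sigma]=\mathbf{G}[\tau]^{\,m+1}\,m^{-(\beta-p)^{2}}\,c'\,(1+o(1)),
\]
which is precisely the assertion of the proposition for the negative integer $-p$; the case $p=0$ is \eqref{sixth} itself.

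For positive shifts I would use that $T_{n}[\rho]^{T}=T_{n}[\rho^{-}]$, hence $D_{n}[\rho]=D_{n}[\rho^{-}]$, with $\rho=(-z)^{p}\sigma=(-z)^{p+\beta}\tau$. A short computation with the fixed branch $\arg\in[-\pi,\pi)$ shows $\arg(-z^{-1})=-\arg(-z)$ for all $z\in\mathbb{T}$ except $z=1$, so $(-z^{-1})^{p+\beta}=(-z)^{-(p+\beta)}$ a.e.\ on $\mathbb{T}$, and therefore
\[
\rho^{-}(z)=(-z)^{-(p+\beta)}\tau(z^{-1})=(-z)^{-p}\,\widetilde{\sigma}(z),\qquad \widetilde{\sigma}(z):=(-z)^{-\beta}\tau(z^{-1}).
\]
Writing $\tau^{-}(z)=\tau(z^{-1})$, one checks $\widetilde{\sigma}\in C_{-\beta}$: $\tau^{-}$ inherits continuity, non-vanishing, and winding number zero from $\tau$, it is $C^{\infty}$ away from $z=1$ with one-sided derivative limits there (the two one-sided limits merely being interchanged), and $|\Re(-\beta)|<\frac{1}{2}$. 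Applying the already-proven negative-shift case to $\widetilde{\sigma}$ with shift $-p$, and using $\mathbf{G}[\tau^{-}]=\mathbf{G}[\tau]$ (the geometric mean is invariant under $\theta\mapsto-\theta$), gives
\[
D_{n}[(-z)^{p}\sigma]=D_{n}[(-z)^{-p}\widetilde{\sigma}]=\mathbf{G}[\tau]^{\,n+1}\,n^{-(p+\beta)^{2}}\,c''\,(1+o(1)),
\]
completing the proof for every integer $p$.

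The analytic heavy lifting is already behind us: Proposition 2 is the substantive ingredient, and this proposition is essentially an assembling step. The only points needing care are (i) tracking the prescribed branch of $(-z)^{\beta}$ through the substitution $z\mapsto z^{-1}$ so as to be sure $(-z)^{p}$ really becomes $(-z)^{-p}$, and (ii) the routine reindexing, i.e.\ absorbing $(1+p/m)^{-(p-\beta)^{2}}$ and the shift $n+1\mapsto m+1$ in the exponent of $\mathbf{G}[\tau]$ into the error term and the constant. One should also keep in mind throughout that ``$c$ is a constant'' means only that it does not depend on $n$; it may (and does) depend on $\tau$, $\beta$, and $p$, and its precise value is pinned down only later.
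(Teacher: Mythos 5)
Your proposal is correct and follows essentially the same route as the paper: the case $p<0$ from \eqref{seventh}, \eqref{sixth} (which the paper miscites as \eqref{fifth}), and Proposition 2, and the case $p>0$ by matrix transposition. You have simply filled in the reindexing $m=n-p$, the exponent algebra $-p^{2}+2\beta p-\beta^{2}=-(p-\beta)^{2}$, and the branch/symbol bookkeeping under $z\mapsto z^{-1}$ that the paper leaves implicit.
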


\begin{proof}
The case $p=0$ is just equation \eqref{fifth}. The case $p<0$ follows from equations \eqref{fifth}, \eqref{seventh}, and Proposition 2. The case $p>0$ is obtained from the case $p<0$ by matrix transposition.
\end{proof}

We have obtained our first order asymptotic expression for $D_{n}[\sigma]$ for $\sigma \in C_{\beta}$, provided $\beta \notin \mathbb{Z}+\frac{1}{2}$. It remains to remove this last condition and to determine the value of the constant $c$ in the above proposition. To this end, we use the following corollary of the Poisson-Jensen formula (\cite{1}, p. 208; see also \cite{20}, p. 358).
\newline
\begin{lemma}
Suppose $h$ is an analytic function on the disk $| z | \le 1$ and satisfies there $| h(z) | \le | \Re z |^{-c}$ for some constant $c>0$. Then for each subdisk $| z | \le \rho < 1$ we have $| h(z) | \le A$ where $A$ is a constant depending only on $c$ and $\rho$.
\end{lemma}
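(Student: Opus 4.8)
The plan is to apply the Poisson--Jensen formula not on the unit circle itself --- where the hypothesis $|h(z)|\le|\Re z|^{-c}$ gives no control near $z=\pm i$ --- but on an interior circle $|w|=R$ with $\rho<R<1$, say $R=\tfrac{1+\rho}{2}$, which is legitimate because $h$ is analytic on the closed subdisk $|z|\le R$. Assuming $h\not\equiv 0$ (the contrary case being immediate), list the zeros $a_1,\dots,a_m$ of $h$ in $|z|<R$; the Poisson--Jensen formula gives, for $|z|<R$,
\[
\log|h(z)|=\sum_{k=1}^{m}\log\left|\frac{R(z-a_k)}{R^{2}-\bar a_k z}\right|+\frac{1}{2\pi}\int_{0}^{2\pi}\frac{R^{2}-|z|^{2}}{|Re^{i\theta}-z|^{2}}\log|h(Re^{i\theta})|\,d\theta .
\]
Each Blaschke factor has modulus less than $1$ for $|z|<R$, so the finite sum is $\le 0$; and since the Poisson kernel is nonnegative and $\log t\le\log^{+}t$, this yields
\[
\log|h(z)|\le\frac{1}{2\pi}\int_{0}^{2\pi}\frac{R^{2}-|z|^{2}}{|Re^{i\theta}-z|^{2}}\,\log^{+}|h(Re^{i\theta})|\,d\theta .
\]

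I would then estimate the two factors in the integrand. For $|z|\le\rho$ the Poisson kernel is bounded by $\frac{R^{2}-|z|^{2}}{(R-|z|)^{2}}=\frac{R+|z|}{R-|z|}\le\frac{R+\rho}{R-\rho}$, a constant depending only on $\rho$. For the boundary values, the hypothesis gives $|h(Re^{i\theta})|\le|R\cos\theta|^{-c}$, so $\log^{+}|h(Re^{i\theta})|\le c\log\frac{1}{R|\cos\theta|}$ (the right-hand side being positive since $R|\cos\theta|\le R<1$), and this is integrable over the circle despite its logarithmic singularities at $\theta=\tfrac{\pi}{2},\tfrac{3\pi}{2}$. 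Combining the two estimates and using the classical value $\frac{1}{2\pi}\int_{0}^{2\pi}\log|\cos\theta|\,d\theta=-\log 2$ gives
\[
\log|h(z)|\le\frac{R+\rho}{R-\rho}\cdot c\left(\log\tfrac1R+\log 2\right),
\]
hence $|h(z)|\le A:=(2/R)^{\,c(R+\rho)/(R-\rho)}$ for all $|z|\le\rho$, with $A$ depending only on $c$ and $\rho$ once $R$ has been fixed as a function of $\rho$.

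This is a routine harmonic-majorization estimate and I do not anticipate a genuine obstacle. The two points needing a little attention are: (i) that Poisson--Jensen must be used on an interior circle rather than on $|z|=1$, because the hypothesised bound degenerates on the imaginary axis; and (ii) the possibility that $h$ has a zero on $|w|=R$, which is handled by choosing $R$ in a small subinterval of $(\rho,1)$ over which the constants above stay bounded, so that the resulting $A$ is still a function of $c$ and $\rho$ alone. Alternatively one can avoid (ii) outright by observing that $\log^{+}|h|$ is subharmonic on $|z|<1$ and is therefore already dominated on $|z|\le\rho$ by the Poisson integral of its restriction to $|w|=R$; the remaining estimates then go through unchanged. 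I will present the Poisson--Jensen version to match the cited reference but may note this shortcut.
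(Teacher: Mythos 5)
Your proof is correct, and it follows exactly the route the paper indicates: the paper itself presents this lemma as ``a corollary of the Poisson--Jensen formula'' and defers the details to the author's thesis \cite{14}, so the harmonic-majorization argument on an interior circle $|w|=R$, with the Blaschke sum discarded and the boundary term controlled via the integrability of $\log(1/|\cos\theta|)$, is precisely the intended proof. Your two technical remarks (working on an interior circle because the hypothesis degenerates on the imaginary axis, and avoiding radii carrying zeros of $h$, or bypassing the issue via subharmonicity of $\log^{+}|h|$) are the right points to flag, and the resulting constant $A$ depends only on $c$ and $\rho$ as required.
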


A proof of this lemma appears in \cite{14}. We now come to our main results.
\newline
\begin{theorem}
For $\sigma(z)=(-z)^{\beta}\tau(z) \in C_{\beta}$ we have
\[
D_{n}[\sigma]=\mathbf{G}[\tau]^{n+1}n^{-\beta^{2}}G(1+\beta)G(1-\beta)E[\tau]\left( 1+o(1) \right),
\]
where
\[
E[\tau]=\exp \left( \sum_{k=1}^{\infty} k \cdot \widehat{\log\tau}(k)\widehat{\log\tau}(-k) \right),
\]
as $n \to \infty$.
\end{theorem}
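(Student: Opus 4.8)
The plan is to treat $D_n[(-z)^\beta\tau]$ as an analytic function of $\beta$, to read off its normalised limit on the strip $|\Re\beta|<\frac{1}{2}$ from the known expansion \eqref{sixth}, and to carry that information to every non-integral $\beta$ by Vitali's convergence theorem. The genuine work is a \emph{uniform-in-$n$} growth bound for this function as $\beta$ approaches the lines $\Re\beta\in\mathbb{Z}+\frac{1}{2}$, precisely the set on which several operators of Sections~3--4 cease to be bounded.

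First set
\[
\Phi_n(\beta)=\frac{D_n[(-z)^\beta\tau]}{\mathbf{G}[\tau]^{n+1}\,n^{-\beta^2}}.
\]
The Fourier coefficients of $(-z)^\beta\tau$ are entire in $\beta$: the integrand $e^{i\beta(\theta-\pi)}\tau(e^{i\theta})e^{-im\theta}$ is entire in $\beta$ and, on bounded $\beta$-sets, dominated, since $|e^{i\beta(\theta-\pi)}|\le e^{\pi|\Im\beta|}$. Hence $D_n$, a polynomial in finitely many of them, is entire, and since $n^{-\beta^2}$ and $\mathbf{G}[\tau]$ never vanish, $\Phi_n$ is entire. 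By \eqref{sixth} and the formula $E[\tau,\beta]=G(1+\beta)G(1-\beta)E[\tau]$ stated there, $\Phi_n(\beta)\to\Phi(\beta):=G(1+\beta)G(1-\beta)E[\tau]$ for every $\beta$ with $|\Re\beta|<\frac{1}{2}$; $\Phi$ is entire and $\Phi(\beta)\ne0$ for $\beta\notin\mathbb{Z}$. If $\Re\beta\notin\mathbb{Z}+\frac{1}{2}$, write $\beta=p+\beta_0$ with $p$ the nearest integer to $\Re\beta$, so $|\Re\beta_0|<\frac{1}{2}$ and $(-z)^p(-z)^{\beta_0}=(-z)^\beta$ (immediate from the definition of the power); Proposition~3 applied to $(-z)^{\beta_0}\tau\in C_{\beta_0}$ with the integer $p$ then gives $\Phi_n(\beta)\to c(\beta)$ for a constant $c(\beta)$. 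By inspection of the proof of Proposition~3 (all the $o(1)$'s being uniform once the norm $\|\hat{k}\|_\infty$ of Lemma~3 is uniformly bounded), this convergence is uniform for $\beta$ in compact subsets of each open strip $\Omega_k=\{k-\frac{1}{2}<\Re\beta<k+\frac{1}{2}\}$, so $\{\Phi_n\}$ is locally uniformly bounded on $\mathbb{C}\setminus\bigcup_k L_k$, where $L_k=\{\Re\beta=k+\frac{1}{2}\}$.

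The crux is to bound $\{\Phi_n\}$ across the lines $L_k$. A crude estimate will not do: Hadamard's inequality applied to the columns of $T_n[(-z)^\beta\tau]$ gives only $|D_n[(-z)^\beta\tau]|\le(e^{\pi|\Im\beta|}\|\tau\|_\infty)^{n+1}$, so the resulting bound on $|\Phi_n(\beta)|$ is exponential in $n$. Instead I would retrace the chain Proposition~1 $\to$ Lemmas~5--7 $\to$ Proposition~2 $\to$ Proposition~3, together with \eqref{seventh}, keeping explicit track of the $\beta$-dependence. The controlling quantity is the symbol norm $\|\hat{k}\|_\infty=\sup_{\xi\in\mathbb{R}}\,|\sin^2\pi\beta|\,/\,|\cosh^2\pi\xi-\sin^2\pi\beta|$ of Lemma~3: it is finite exactly when $\Re\beta\notin\mathbb{Z}+\frac{1}{2}$, and as $\beta\to L_k$ it blows up only polynomially in the distance $|\Re\beta-(k+\frac{1}{2})|$ from $\beta$ to $L_k$ (like the square of its reciprocal near the real point $k+\frac{1}{2}$, like its reciprocal elsewhere). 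Since this norm controls $\|\mathbf{\tilde{K}}\|$, hence the operator norms in the Neumann expansions and the sizes of all the Euler--Maclaurin remainders, one extracts a bound $|\Phi_n(\beta)|\le C\,|\Re\beta-(k+\frac{1}{2})|^{-c}$ valid for $\beta$ in a disc about any point of $L_k$ small enough to meet no other $L_j$, with $C$ and $c>0$ independent of $n$. Recentring such a disc at a point of $L_k$ and rescaling it to $|z|\le1$, the Poisson--Jensen corollary (Lemma~8) upgrades this to $|\Phi_n|\le A$ on a fixed smaller disc, $A$ independent of $n$. Covering a neighbourhood of each $L_k$ by such discs and combining with the previous paragraph, $\{\Phi_n\}$ is locally uniformly bounded on all of $\mathbb{C}$. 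I expect this growth estimate---re-running the full approximation scheme with $\beta$ allowed to approach the very set on which the scheme degenerates---to be the main obstacle.

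Finally, apply Vitali's convergence theorem on the connected open set $\mathbb{C}$: $\{\Phi_n\}$ is locally uniformly bounded and converges pointwise on the strip $|\Re\beta|<\frac{1}{2}$, a set with accumulation points, so $\Phi_n$ converges locally uniformly on $\mathbb{C}$ to an entire function $\Psi$; since $\Psi=\Phi$ on that strip, $\Psi\equiv\Phi$ by the identity theorem. Consequently $c(\beta)=\Phi(\beta)$ whenever $\Re\beta\notin\mathbb{Z}+\frac{1}{2}$---so the last restriction on $\beta$ is removed and the constant in Proposition~3 is identified at a stroke---and for every $\beta\notin\mathbb{Z}$,
\[
D_n[\sigma]=\mathbf{G}[\tau]^{n+1}\,n^{-\beta^2}\,\Phi_n(\beta)=\mathbf{G}[\tau]^{n+1}\,n^{-\beta^2}\,G(1+\beta)G(1-\beta)E[\tau]\bigl(1+o(1)\bigr),
\]
the passage to $(1+o(1))$ being legitimate since $\Phi(\beta)\ne0$. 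This is the assertion of the theorem.
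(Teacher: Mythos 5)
Your proposal is correct and follows essentially the same route as the paper: reduce to the strip $|\Re\beta|<\frac{1}{2}$ via Proposition~3 and equation \eqref{seventh}, establish that the normalized determinant blows up at most polynomially in $\mathrm{dist}(\beta,\mathbb{Z}+\frac{1}{2})$ uniformly in $n$, upgrade to local uniform boundedness with the Poisson--Jensen corollary (Lemma~8), and identify the limit everywhere by Vitali's theorem. You correctly isolate the uniform-in-$n$ growth bound near $\Re\beta\in\mathbb{Z}+\frac{1}{2}$ as the crux, which is exactly where the paper (deferring to the estimates $|y_{i,j}|\le d_\beta^{-M_1}n^{-1+2\Re\beta}$ and $|D_n|n^{\beta^2}\le c\,d_\beta^{-3}$ from \cite{14} and \cite{20}) concentrates its effort.
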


\begin{proof}
The proof of this theorem is in several steps. We first determine the behavior of the coefficients $y_{i,j}$ as $| \Re \beta | < \frac{1}{2}$, $| \Re \beta | \to \frac{1}{2}$. The idea, with Lemma 8 in mind, is to show that the formula for $y_{i,j}$ at most blows up only polynomially at the boundary $| \Re \beta | = \frac{1}{2}$. In \cite{14}, pp. 49-53, the estimate $| y_{i,j} | \le d_{\beta}^{-M_{1}}n^{-1+2\Re \beta}$ is obtained from equation \eqref{sixteenth}, where $d_{\beta}=\min \{ \frac{1}{2}-\Re \beta , \frac{1}{2} + \Re \beta \}$ and $M_{1}$ is a constant. From this result one demonstrates that $\det Y$ itself at most blows up only polynomially at $| \Re \beta | = \frac{1}{2}$, the formula being
\begin{equation}
| \det Y | \le c d_{\beta}^{-M_{2}}n^{-p^{2}+2\Re \beta p}, \label{seventeenth}
\end{equation}
where $c$ is a constant depending on $\tau$, $Y$ is $p\times p$, and $M_{2}$ is a constant. The means by which these results are obtained are as follows. In \cite{14} it is shown that $p^2-p+1$ terms of the asymptotic series for the coefficients $y_{i,j}$ are required to obtain the first order term for $\det Y$, due to the large number of cancelling terms, along the lines of the proof of Lemma 7. Writing $y_{i,j}=w_{i,j}+\epsilon_{i,j}$, where
\[
w_{i,j}=\sum_{k=0}^{p^{2}-p} p_{k}(i,j)n^{-1+2\beta-k}
\]
denotes the first $p^{2}-p+1$ terms in the expansion of $y_{i,j}$, we consider the expansion
\[
\begin{aligned}
\det Y &= \det \left[ \left( w_{i,j} \right)_{0\le i,j <p} + \left( \epsilon_{i,j} \right)_{0\le i,j <p} \right] \\
&= \det \left[ \left( w_{i,j} \right)_{0\le i,j <p} \right] +\epsilon
\end{aligned}
\]
where $\epsilon$ denotes the error obtained by the multilinear expansion of the determinant. This expansion gives the exponent of $n$ of equation \eqref{seventeenth}; the polynomial growth of $d_{\beta}$ arises from the polynomial growth of the corresponding term in $y_{i,j}$ and from the fact that the coefficients of the polynomials $p_{k}(i,j)$ are also polynomially bounded; see \cite{14}, Lemma 5.6.

We now make use of the estimate $| D_{n}[(-z)^{\beta}\tau] | n^{\beta^{2}} \le c d_{\beta}^{-3}$, essentially done in \cite{20}, \S XIII, the details of which are found in \cite{14}, Lemma 5.7. In combination with the previous result, we obtain the estimate
\[
| D_{n}[(-z)^{\beta}\tau] | n^{\beta^{2}} \le cd_\beta^{-M_{3}},
\]
where we now take $d_{\beta}=dist(\beta,\mathbb{Z}+\frac{1}{2})$ and $M_{3}$ is a constant. Applying Lemma 8 we conclude that
\[
D_{n}[(-z)^{\beta}\tau]n^{-\beta^{2}}=O(1)
\]
uniformly on compact subsets of the complex plane. It follows that
\[
D_{n}[\sigma]=\mathbf{G}[\tau]^{n+1}n^{-\beta^{2}}c \left( 1+ o(1) \right)
\]
where $c$ depends on $\tau$ and $\beta$. If $| \Re \beta | < \frac{1}{2}$ then a result due to Basor \cite{3} and B\"{o}ttcher \cite{5} states that
\[
c=G(1+\beta)G(1-\beta)E[\tau]
\]
Since the foregoing results demonstrate that, for fixed $\tau$ but variable $\beta$, $D_{n}[\sigma]$ is an analytic function of $\beta$, Vitali's convergence theorem (\cite{18}, p. 168) implies that the formula for $c$ holds for all $\beta$, proving Theorem 3.
\end{proof}

\begin{theorem}
If $\beta \notin \mathbb{Z}$ then the eigenvalues of $T_{n}[\sigma]$ are canonically distributed as $n \to \infty$. Moreover, the limiting set $L$ of the eigenvalues of $T_{n}[\sigma]$ equals the closure of the range of $\sigma$.
\end{theorem}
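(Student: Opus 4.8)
The plan is to obtain Theorem~4 as a corollary of Theorem~3 (and Proposition~3) combined with the operator-theoretic facts recalled above, following the pattern of Widom's and B\"ottcher--Silbermann's analyses of eigenvalue distribution. The device linking the determinant asymptotics to statements about eigenvalues is the family of logarithmic potentials of the counting measures $\mu_{n,\sigma}$. The point that makes this go through cleanly is that, for every $\mu\notin\overline{\mathrm{range}(\sigma)}$, the shifted symbol $\sigma-\mu$ is again a symbol of the type handled by Theorem~3: $\sigma$ is $C^\infty$ off $z=1$ and has there a single jump of non-zero magnitude $2i\,\tau(1)\sin\pi\beta$ (using $\beta\notin\mathbb Z$), so one may write $\sigma(z)-\mu=(-z)^{\beta_\mu}\tau_\mu(z)$ with $\beta_\mu$ chosen to make $\tau_\mu$ continuous and of winding number zero; then $\beta_\mu\notin\mathbb Z$, and $\tau_\mu$ is non-vanishing, $C^\infty$ off $z=1$, with one-sided limits of all derivatives at $z=1$ --- i.e.\ $\sigma-\mu\in C_{\beta_\mu}$ (the class $C_\beta$ was defined so as to tolerate exactly the ``corner'' that $\tau_\mu$ develops at $z=1$). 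One also checks
\[
\frac1{2\pi}\int_0^{2\pi}\log\bigl|(-e^{i\theta})^{\beta_\mu}\bigr|\,d\theta=-\Im\beta_\mu\cdot\frac1{2\pi}\int_0^{2\pi}(\theta-\pi)\,d\theta=0,
\]
so that $\tfrac1{2\pi}\int\log|\sigma-\mu|\,d\theta=\tfrac1{2\pi}\int\log|\tau_\mu|\,d\theta=\log|\mathbf G[\tau_\mu]|$.

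For canonical distribution I would first reduce \eqref{first} to the pointwise limit
\[
\frac1{n+1}\log\bigl|D_n[\sigma-\mu]\bigr|\ \longrightarrow\ \frac1{2\pi}\int_0^{2\pi}\log|\mu-\sigma(e^{i\theta})|\,d\theta\qquad\text{for a.e.\ }\mu\in\mathbb C .
\]
Since every $\mu_{n,\sigma}$ is supported in the fixed disk $\{|\lambda|\le\|\sigma\|_\infty\}$, the associated potentials $\mu\mapsto\int\log|\lambda-\mu|\,d\mu_{n,\sigma}(\lambda)=\frac1{n+1}\log|D_n[\sigma-\mu]|$ are uniformly bounded above and uniformly in $\mathbf L^1_{\mathrm{loc}}$; pointwise a.e.\ convergence to the potential of the probability measure $\mu_\sigma$ then forces $\mu_{n,\sigma}\to\mu_\sigma$ weakly, which is \eqref{first}. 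For the displayed limit it is enough to take $\mu\notin R_\sigma$, since $R_\sigma$ (a finite union of $C^\infty$ arcs and one segment) is Lebesgue-null in $\mathbb C$; for such $\mu$ apply Theorem~3 to $\sigma-\mu=(-z)^{\beta_\mu}\tau_\mu$, observe that the factor $n^{-\beta_\mu^2}$ and the non-zero constant $G(1+\beta_\mu)G(1-\beta_\mu)E[\tau_\mu]$ wash out of $\tfrac1{n+1}\log|\cdot|$, and invoke the identity above.

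For the limiting set, ``$\overline{\mathrm{range}(\sigma)}\subseteq L$'' follows from the canonical distribution just established: if $\lambda_0\in\mathrm{range}(\sigma)$ and $U$ is a neighbourhood of $\lambda_0$, then $\sigma^{-1}(U)$ contains a non-empty (possibly one-sided) open arc, so $\mu_\sigma(U)=m(\sigma^{-1}(U))>0$; by the portmanteau theorem $\liminf_n\mu_{n,\sigma}(U)>0$, so $T_n[\sigma]$ has an eigenvalue in $U$ for all large $n$, and shrinking $U$ exhibits $\lambda_0$ as a limit of eigenvalues along a strictly increasing sequence of indices; since $L$ is closed this gives $\overline{\mathrm{range}(\sigma)}\subseteq L$. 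For the reverse inclusion I must show that every compact $C\subseteq\mathbb C\setminus\overline{\mathrm{range}(\sigma)}$ is eigenvalue-free for all large $n$, i.e.\ that $D_n[\sigma-\mu]\ne0$ on $C$ for $n\ge N_0$ with $N_0$ independent of $\mu\in C$. By Theorem~3, $D_n[\sigma-\mu]=\mathbf G[\tau_\mu]^{n+1}n^{-\beta_\mu^2}G(1+\beta_\mu)G(1-\beta_\mu)E[\tau_\mu]\,(1+o(1))$, and the constant here is non-zero because $E[\tau_\mu]\ne0$ and the Barnes $G$-function vanishes only at the non-positive integers while $1\pm\beta_\mu\notin\mathbb Z$; hence $D_n[\sigma-\mu]\ne0$ for $n$ large. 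The uniformity of the threshold over $C$ I would extract from the uniform-in-parameter estimates underlying Theorem~3 --- the polynomial blow-up bound \eqref{seventeenth} and the estimate $|D_n[(-z)^\beta\tau]|\,n^{\beta^2}\le c\,d_\beta^{-M_3}$ --- together with Lemma~8 and a Vitali/normal-families argument now in the variable $\mu$; on any piece of $C$ lying in a component of $\mathbb C\setminus R_\sigma$ where $w(R_{\sigma-\mu})=0$ this is superfluous, since there the finite section method (\cite{6}, ch.~3) already gives $\sup_n\|(T_n[\sigma]-\mu)^{-1}\|<\infty$.

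The main obstacle is precisely that last uniformity, in the case where $C$ approaches the relative interior of the segment joining the one-sided limits of $\sigma$ at $z=1$: that segment lies in $R_\sigma$, which is the essential spectrum of $T[\sigma]$, so no soft operator-theoretic argument can control $(T_n[\sigma]-\mu)^{-1}$ there, and moreover $\Re\beta_\mu\in\mathbb Z+\tfrac12$ for $\mu$ on that segment --- the exact delicate regime that necessitated the Poisson--Jensen detour in the proof of Theorem~3. Re-running that analysis with $\mu$ adjoined as a parameter, so as to show that $D_n[\sigma-\mu]$ remains bounded away from $0$ on a full complex neighbourhood of each such $\mu$, is where I expect the real work to lie; the canonical-distribution half of the theorem is comparatively soft once Theorem~3 is available.
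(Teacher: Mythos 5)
Your proposal follows essentially the same route as the paper: canonical distribution is deduced from Theorem 3 via the logarithmic potentials $\frac{1}{n+1}\log|D_{n}[\sigma-\lambda]|$ (the paper cites Widom's Lemma 5.1 of \cite{21} for precisely the potential-theoretic step you sketch), the inclusion $\overline{\mathrm{range}(\sigma)}\subseteq L$ comes from a bump-function/portmanteau argument, and the reverse inclusion from the nonvanishing of $D_{n}[\sigma-\lambda]$ for $\lambda$ off the closed range. The uniformity in $\lambda$ that you single out as the main obstacle is exactly the step the paper dispatches with ``it easily follows that the estimate holds uniformly for any $\tilde{\lambda}$ in a small neighborhood of $\lambda$,'' so your Vitali-in-$\mu$ sketch is, if anything, more explicit than the published argument on that point.
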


\begin{proof}
We have
\[
\lim_{n \to \infty} \frac{1}{n+1} \log | D_{n}[\sigma-\lambda] | = \log \mathbf{G} \left[ | \sigma-\lambda | \right],
\]
which holds in the sense of measure for $\lambda \in \mathbb{C}$, as the constant term $c$ of Theorem 3 is nonzero. By a result of Widom (\cite{21}, Lemma 5.1), this fact implies canonical distribution of the eigenvalues. Now let $\{\lambda_{0,n}, \ldots , \lambda_{n,n} \}$ denote the eigenvalues of $T_{n}[\sigma]$, counted according to multiplicity. Let $\lambda$ be a point in the closure of the range of $\sigma$ and for $\epsilon > 0$ let $F_{\epsilon}$ be a continuous function, positive near $\lambda$ and zero outside the open disk of radius $\epsilon$ centered at $\lambda$. We have $\int (F_{\epsilon} \circ \sigma )d\theta >0$; by the above discussion it follows that for any $n$ sufficiently large, there is an $i_{n}$ such that $dist(\lambda_{i_{n}},\lambda)<\epsilon$. Thus $\lambda$ is a limit point of a sequence of eigenvalues and therefore is in $L$. $L$ therefore contains the closure of the range of $\sigma$. For the reverse inclusion, suppose $\lambda$ is not in the closure of the range of $\sigma$. By Theorem 3, $D_{n}[\sigma-\lambda]$ is bounded away from zero for $n$ sufficiently large and it easily follows that the estimate holds uniformly for any $\tilde{\lambda}$ in a small neighborhood of $\lambda$. It follows that no infinite sequence $\{ \lambda_{i_{k},n_{k}} \}_{k=0}^{\infty}$ tends to $\lambda$, so $\lambda \notin L$.
\newline
\end{proof}

\begin{corollary}
For any $\epsilon>0$ the number of eigenvalues $\lambda_{i,n}$ within $\epsilon$ distance of a given point in the range of $\sigma$ is $O(n)$.
\newline
\end{corollary}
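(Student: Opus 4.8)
The plan is to read both directions of the estimate off the canonical distribution statement of Theorem 4. The upper bound is immediate and is formally all that is asked: for each $n$ the matrix $T_{n}[\sigma]$ has exactly $n+1$ eigenvalues counted with multiplicity, so the number of them lying within $\epsilon$ of any fixed point is at most $n+1=O(n)$. What makes the statement worth recording is that this count is in fact of exact order $n$ when the point genuinely lies in the range of $\sigma$, and for the matching lower bound I would argue exactly as in the proof of Theorem 4.

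Concretely, fix $\lambda$ in the range of $\sigma$, say $\lambda=\sigma(e^{i\theta_{0}})$ with $\theta_{0}\in[0,2\pi)$, and fix $\epsilon>0$. Choose a continuous $F_{\epsilon}\colon\mathbb{C}\to[0,1]$ with $F_{\epsilon}\equiv 1$ on a neighborhood of $\lambda$ and $F_{\epsilon}\equiv 0$ off the open disk of radius $\epsilon$ about $\lambda$ --- exactly the bump function used in the proof of Theorem 4. Since $0\le F_{\epsilon}(w)\le 1$ and $F_{\epsilon}(w)=0$ for $|w-\lambda|\ge\epsilon$,
\[
\#\{\, i : |\lambda_{i,n}-\lambda|<\epsilon \,\} \ \ge\ \sum_{i=0}^{n}F_{\epsilon}(\lambda_{i,n}).
\]
By Theorem 4 the eigenvalues are canonically distributed, so by \eqref{first} the right-hand side divided by $n+1$ tends to $c_{\epsilon}:=\frac{1}{2\pi}\int_{0}^{2\pi}F_{\epsilon}(\sigma(e^{i\theta}))\,d\theta$. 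It remains to check $c_{\epsilon}>0$: because $\sigma$ is continuous at $\theta_{0}$ (or, if $\theta_{0}=0$, right-continuous there, since $\tau$ is continuous on $\mathbb{T}$ and $(-z)^{\beta}$ is right-continuous at $z=1$), the preimage under $\sigma$ of a small ball about $\lambda$ contains a one-sided neighborhood of $\theta_{0}$, hence a set of positive Lebesgue measure on which $F_{\epsilon}\circ\sigma\equiv 1$; thus $c_{\epsilon}>0$. Consequently $\#\{\, i : |\lambda_{i,n}-\lambda|<\epsilon \,\}\ge\tfrac12 c_{\epsilon}(n+1)$ for all $n$ large, and combined with the trivial bound $n+1$ this confines the count between two positive multiples of $n$, so in particular it is $O(n)$.

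The only point demanding any care is the discontinuity at $\theta=0$, where $\sigma$ itself is not continuous, so one cannot simply invoke continuity to produce a positive-measure preimage. But the smoothness hypotheses in the definition of $C_{\beta}$ ensure $\sigma$ has the one-sided limit $\sigma(1)$ at $\theta=0$, so a half-open interval $[0,\delta)$ lies in the preimage of the ball of radius $\epsilon$ about $\lambda$ for some $\delta>0$, which is all that is needed. Everywhere else the argument is routine continuity, and I do not expect any further obstacle beyond this verification.
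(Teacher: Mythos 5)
Your proposal is correct, and it is the argument the paper intends: Corollary 1 is stated without a separate proof precisely because it is the quantitative form of the bump-function computation already displayed in the proof of Theorem 4, where canonical distribution gives $\frac{1}{n+1}\sum_{i}F_{\epsilon}(\lambda_{i,n})\to\frac{1}{2\pi}\int F_{\epsilon}(\sigma(e^{i\theta}))\,d\theta>0$. Your additional observations --- that the stated $O(n)$ upper bound is trivial from the matrix size, that the real content is the matching lower bound of order $n$, and that positivity of the limit requires only the one-sided continuity of $\sigma$ at its single jump --- are all accurate and consistent with the paper's treatment.
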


\begin{corollary}
For any $\epsilon>0$ there is a number $N$ such that the eigenvalues of $T_{n}[\sigma]$ are within $\epsilon$ distance of the range of $\sigma$ whenever $n>N$.
\newline
\end{corollary}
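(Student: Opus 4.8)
The plan is to deduce the statement from the reverse inclusion of Theorem 4 by a finite-subcover argument. It is worth noting at the outset that canonical distribution alone does not suffice: weak convergence of $\mu_{n,\sigma}$ to $\mu_{\sigma}$ tolerates up to $o(n)$ eigenvalues straying arbitrarily far from the range, so one genuinely needs the limiting-set half of Theorem 4, together with the local uniformity already isolated in its proof.

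First I would record the a priori bound on the spectrum: since $T_{n}[\sigma]=P_{n}M[\sigma]P_{n}$ has operator norm at most $\| \sigma \|_{\infty}$, every eigenvalue $\lambda_{i,n}$ satisfies $| \lambda_{i,n} | \le \| \sigma \|_{\infty}$, so all eigenvalues of all the $T_{n}[\sigma]$ lie in a single fixed closed disk $\overline{D}$. Fix $\epsilon>0$ and set $S_{\epsilon}=\{ \lambda \in \overline{D} : dist(\lambda, range(\sigma)) \ge \epsilon \}$, a compact set. For each $\lambda \in S_{\epsilon}$ one has $\lambda \notin \overline{range(\sigma)}$, since the distance from a point to a set equals its distance to the closure of that set, and this distance is at least $\epsilon>0$. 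By the reverse-inclusion argument in the proof of Theorem 4, for such $\lambda$ the quantity $D_{n}[\sigma-\lambda]$ is bounded away from zero for $n$ sufficiently large, uniformly for $\tilde{\lambda}$ in some open neighborhood $U_{\lambda}$ of $\lambda$; hence there is an $N_{\lambda}$ such that $T_{n}[\sigma]$ has no eigenvalue in $U_{\lambda}$ whenever $n>N_{\lambda}$.

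The sets $\{ U_{\lambda} \}_{\lambda \in S_{\epsilon}}$ cover the compact set $S_{\epsilon}$; extract a finite subcover $U_{\lambda_{1}}, \ldots, U_{\lambda_{k}}$ and put $N=\max_{1 \le j \le k} N_{\lambda_{j}}$. Then for every $n>N$ the matrix $T_{n}[\sigma]$ has no eigenvalue in $\bigcup_{j} U_{\lambda_{j}} \supseteq S_{\epsilon}$, while all of its eigenvalues lie in $\overline{D}$; therefore every eigenvalue lies in $\overline{D} \setminus S_{\epsilon}$, i.e.\ within $\epsilon$ of $range(\sigma)$, which is the assertion. The main obstacle is not in this packaging but in the ingredient it rests on — the reverse inclusion of Theorem 4, which in turn uses the full strength of Theorem 3 (in particular that the constant $c$ there is nonzero, so that $D_{n}[\sigma-\lambda]$ stays bounded away from zero off the closure of the range, with the estimate locally uniform in $\lambda$). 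Given that input, the only point requiring care is the use of the uniform bound $| \lambda_{i,n} | \le \| \sigma \|_{\infty}$, which is what makes the exceptional region $S_{\epsilon}$ compact and hence coverable by finitely many of the neighborhoods $U_{\lambda}$.
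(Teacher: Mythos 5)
Your proof is correct and rests on the same two ingredients as the paper's: the uniform bound $|\lambda_{i,n}|\le \|T[\sigma]\|$ confining all eigenvalues to a fixed compact disk, and the eventual non-vanishing of $D_{n}[\sigma-\lambda]$ for $\lambda$ off the closure of the range. The only difference is packaging: the paper argues by contradiction, extracting a convergent subsequence from a hypothetical sequence of stray eigenvalues and invoking Theorem 4's limiting-set statement directly, whereas you run the equivalent Heine--Borel version, covering the exceptional compact set $S_{\epsilon}$ by the neighborhoods $U_{\lambda}$ furnished inside the proof of Theorem 4's reverse inclusion --- your parenthetical that canonical distribution alone would not suffice is a correct and worthwhile observation, but the two arguments are essentially the same.
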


\begin{proof}
Suppose not, i.e., that there exists a sequence $\{ \lambda_{i_{k},n_{k}} \}_{k=0}^{\infty}$, with $n_{0} < n_{1} < \cdots$, outside the set of points within $\epsilon$ distance of the range of $\sigma$. As the eigenvalues of $T_{n}[\sigma]$ are uniformly bounded in absolute value by the (finite) operator norm of $T[\sigma]$ on $\mathbf{H}^{2}$, it follows that $\{ \lambda_{i_{k},n_{k}} \}_{k=0}^{\infty}$ has a subsequence which converges to a value $\lambda$, which by construction is not in the range of $\sigma$, a contradiction of Theorem 4.
\end{proof}

We conclude by noting that the condition $\beta \notin \mathbb{Z}$ is necessary, as the counterexample $\beta =1$, $\tau(z) =1$, $\sigma(z)=-z$, easily demonstrates.

\end{document}